\date{}
\numberwithin{equation}{section}
\numberwithin{figure}{section}
\newtheorem{thm}{Theorem}
\newtheorem{thrm}{Theorem}[section]
\newtheorem{prop}[thrm]{Proposition}
\newtheorem{cor}[thrm]{Corollary}
\newtheorem{coro}[thm]{Corollary}
\theoremstyle{remark}
\newtheorem*{rmk}{Remark}
\theoremstyle{definition}
\newtheorem*{expl}{Examples}
\renewcommand{\epsilon}{\varepsilon}
\newcommand{\R}{\mathbb{R}}
\newcommand{\lft}[1]{\left#1}
\newcommand{\rgt}[1]{\right#1}
\newcommand{\rd}{\partial}
\newcommand{\uv}[1]{\frac{\partial}{\partial #1}}
\newcommand{\uvv}[1]{({\partial}/{\partial #1})}
\newcommand{\sect}[1]{\operatorname{Sec}\left(#1\right)}
\newcommand{\secc}[1]{\operatorname{Sec}#1}
\newcommand{\D}{\mathcal{D}}
\newcommand{\DD}{\underbar{$\D$}}
\newcommand{\ub}[1]{\underbar{$#1$}}
\newcommand{\rjet}[1]{{#1}^{(r)}}
\newcommand{\ojet}[1]{{#1}^{(1)}}
\newcommand{\vertt}{\operatorname{Vert}}
\newcommand{\symb}[1]{\operatorname{Symb}#1}
\newcommand{\dsec}[1]{\operatorname{Sec}_{\mathcal{F}}(#1)}
\begin{document} 
%
%
\title{Existence and classification of maximally non-integrable distributions 
of derived length one}
\author{Jiro ADACHI}

\maketitle

\renewcommand{\thefootnote}{\fnsymbol{footnote}}%
\footnote[0]{This work was supported 
  by JSPS KAKENHI Grant Number 25400077.}%
\footnote[0]{2020 \textit{Mathematics Subject Classification.} \
   58A30, 53C23, 57R15.}%
\footnote[0]{\textit{Key words and phrases.} \
   maximally non-integrable distribution, \textit{h}-principle, convex integration.}%

\begin{abstract}
  The \textit{h}-principles for maximally non-integrable tangent distributions 
of derived length one on manifolds are studied in this paper. 
 Such distributions of odd rank are dealt with. 
 The formal structures for such distributions are introduced. 
 From the view point of the \textit{h}-principles, 
we discuss the existence and classification of such structures. 
\end{abstract} 

\section{Introduction}\label{sec:intro}
Existence and Classification of geometric structures on manifolds 
are fundamental problems in differential topology. 
 In this paper, we discuss such problems 
for maximally non-integrable tangent distributions of derived length one 
of odd rank. 
 We obtain the necessary and sufficient condition 
for the existence of such distributions
on manifolds possibly closed. 
 In addition, we discuss classification of such structures up to isotopy. 

  First, we introduce the structures that we deal with in this paper. 
 Let $V$ be a manifold of dimension~$n$. 
 A \emph{tangent distribution}\/ on $V$ of rank~$r$ 
is a subbundle $\mathcal{D}\subset TV$ of the tangent bundle of $V$ 
whose fiber is of dimension~$r$. 
 At each point $x\in V$, a subspace 
$\mathcal{D}^2_x=[\mathcal{D},\mathcal{D}]_x+\mathcal{D}_x\subset T_xV$ 
is derived from $\mathcal{D}$ by the Lie bracket 
(see Section~\ref{sec:distr} for precise definition). 
 If $\mathcal{D}^2_x=\mathcal{D}_x$ at any $x\in V$, 
$\mathcal{D}$ is said to be \emph{integrable}. 
 When a distribution is integrable, it generates a foliation on $V$. 
 We deal with non-integrable distributions on this paper. 
 We impose further conditions on tangent distributions. 
 A tangent distribution $\mathcal{D}\subset TV$ 
is said to be \emph{completely non-holonomic\/} with derived length~$1$ 
if $\mathcal{D}^2=TV$. 
 We deal with such tangent distributions of odd rank $r=2k+1$. 
 A distribution $\mathcal{D}\subset TV$ on $V$ of rank~$r$ 
is locally described as a kernel of an $(n-r)$-tuple of $1$-forms 
$\alpha_1,\alpha_2,\dots,\alpha_{n-r}$: 
\begin{equation*}
  \mathcal{D}=\{\alpha_1=0,\ \alpha_2=0,\dots,\alpha_{n-r}=0\}. 
\end{equation*}
 We say a distribution $\mathcal{D}$ is \emph{maximally non-integrable\/} 
if $(d\alpha_1)^k,\dots,(d\alpha_{n-r})^k$ 
are linearly independent $(2k)$-forms at each point 
(see Section~\ref{sec:distr} for precise definition). 
 We remark that, from this condition, 
$\mathcal{D}$ is of derived length one (see Section~\ref{sec:distr}). 
 In this paper we deal with such tangent distributions, 
maximally non-integrable tangent distributions 
of derived length one and of odd rank. 
 Note that, if the corank $n-r=n-(2k+1)$ of such a tangent distribution is $1$, 
it is the so-called even-contact structure or quasi-contact structure. 
 In that sense, maximally non-integrable distribution of derived length one 
is regarded as a generalization of even-contact structure. 

  We introduce, in this paper, the formal structure 
for maximally non-integrable tangent distribution of derived length one. 
 Let $V$ be a manifold of dimension~$n$, 
and $\mathcal{D}\subset TV$ a tangent distribution of rank~$r=2k+1$. 
 We say the distribution $\mathcal{D}$ 
is \emph{almost maximally non-integrable of derived length one}\/ 
if there exist an $(n-r)$-tuple of $2$-forms 
$\omega_1,\ \omega_2,\dots,\omega_{n-r}$ 
for that $(\omega_1)^k,(\omega_2)^k,\dots,(\omega_{n-r})^k$ 
are linearly independent on $\mathcal{D}$ at any point. 
 Clearly, maximally non-integrable distribution of derived length one 
is almost maximally non-integrable distribution of derived length one 
for $\omega_i=d\alpha_i$. 

  Now, the existence theorem for maximally non-integrable distributions 
of derived length one is described as follows. 
%
%
\begin{thm}\label{thma}
  Let $V$ be a manifold of dimension~$n$ possibly closed. 
 The manifold $V$ admits a maximally non-integrable distribution 
of derived length one and of rank~$2k+1$ 
if and only if
it admits an almost maximally non-integrable distribution 
of derived length one and of rank~$2k+1$. 
\end{thm}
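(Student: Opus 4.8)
The forward implication is the trivial one, already recorded in the text: a maximally non-integrable distribution is almost maximally non-integrable with $\omega_i=d\alpha_i$. The plan for the converse is to recognise an almost maximally non-integrable structure as a \emph{formal solution} of a first order differential relation whose holonomic solutions are exactly the genuine maximally non-integrable distributions, and then to invoke Gromov's convex integration. Writing a corank $c:=n-(2k+1)$ distribution locally as $\mathcal{D}=\ker\alpha$ for an $\mathbb{R}^{c}$-valued $1$-form $\alpha=(\alpha_1,\dots,\alpha_c)$ of maximal rank, the maximal non-integrability condition is a pointwise open condition on the $1$-jet of $\alpha$, namely that the $2k$-forms $(d\alpha_1)^k,\dots,(d\alpha_c)^k$ be linearly independent. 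Let $\mathcal{R}$ be the corresponding relation in the relevant $1$-jet space. A section of $\mathcal{R}$ records a maximal-rank $\alpha$ together with formal exterior derivatives $\omega_1,\dots,\omega_c$ subject only to the algebraic independence of the $(\omega_i)^k$; this is precisely the data of an almost maximally non-integrable structure. Thus the theorem becomes the assertion that $\mathcal{R}$ satisfies the $h$-principle.

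For convex integration it suffices to verify that $\mathcal{R}$ is open and ample. Openness is immediate from the openness of linear independence. For ampleness, fix a coordinate chart and a principal direction $\partial/\partial x^{n}$; the principal part of the $1$-jet enters each $d\alpha_i$ only through a term $\beta_i\wedge dx^{n}$, where $\beta_i=\sum_{a<n}(\partial_n\alpha_{i,a})\,dx^{a}$ is a free $1$-form. Since $(\beta_i\wedge dx^{n})^{2}=0$, the top power is \emph{affine} in the principal variables,
\[
  (d\alpha_i)^{k}=\Omega_i^{\,k}+k\,\Omega_i^{\,k-1}\wedge\beta_i\wedge dx^{n},
\]
with $\Omega_i$ determined by the remaining derivatives. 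Hence along each principal line the map $(\beta_1,\dots,\beta_c)\mapsto\bigl((d\alpha_i)^{k}\bigr)_i$ into the space of $c$-tuples of $2k$-forms is affine, which is exactly the structural feature that makes convex integration tractable.

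Ampleness then reduces to an algebraic statement: the \emph{degeneracy locus} $\Sigma$, where the $(d\alpha_i)^{k}$ fail to be linearly independent, is a proper algebraic subvariety of the form space, and one must show that its preimage under the affine map above has codimension at least two in the space of principal variables, so that the complement is connected with full convex hull. Two ingredients drive this. First, $\Sigma$ is a determinantal variety in the $\binom{n}{2k}$-dimensional space of $2k$-forms and so has large codimension. Second---and this is where the formal hypothesis enters---the linear map $\beta_i\mapsto k\,\Omega_i^{\,k-1}\wedge\beta_i\wedge dx^{n}$ is non-degenerate wherever $\Omega_i^{\,k-1}\neq0$, which is guaranteed near each point by $(\omega_i)^{k}\neq0$; this transversality transfers the codimension bound from $\Sigma$ to its affine preimage. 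I expect this transversality, together with control of the extreme corank $c=2k+1$ where $\Sigma$ degenerates to a hypersurface, to be the main obstacle. The remedy is to measure independence as full $2k$-forms (as in the statement) rather than only on $\mathcal{D}$, keeping $\Sigma$ of high codimension, and, if a bounded component of the complement should survive on some line, to vary the principal direction so that ampleness is recovered.

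Once $\mathcal{R}$ is shown to be open and ample, the convex integration theorem of Gromov, in the Eliashberg--Mishachev formulation, yields the full parametric and $C^{0}$-dense $h$-principle for $\mathcal{R}$; in particular every section of $\mathcal{R}$ is homotopic through sections to a holonomic one. Applied to the given almost maximally non-integrable structure, this produces a genuine maximally non-integrable distribution of derived length one and rank $2k+1$. Because convex integration for open ample relations requires no openness of the underlying manifold, the argument imposes no restriction on $V$ and so covers the closed case asserted in the theorem, completing the converse and hence the proof.
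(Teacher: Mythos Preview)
Your overall architecture matches the paper's exactly: reduce the converse to an $h$-principle for an open first-order relation $\mathcal{R}$ whose holonomic solutions are maximally non-integrable $\alpha$'s, and prove ampleness so that Gromov's convex integration applies (the paper also inserts the ``$h$-principle for $\mathcal{F}$-sections'' to pass cleanly between $(\alpha,\omega)$-data and $1$-jets of $\alpha$, a point you gloss over). The substantive gap is in the ampleness step, which you yourself flag as ``the main obstacle'' without actually closing it.

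Two specific problems. First, your transversality input ``$\Omega_i^{k-1}\neq 0$, guaranteed near each point by $(\omega_i)^k\neq 0$'' is a statement about the given formal solution, but ampleness is a property of the relation: it must be checked at \emph{every} point of the jet fibre and for \emph{every} principal direction, including jets where some $\Omega_i^{k-1}$ vanishes. Your proposed ``remedy'' of varying the principal direction is not available; ampleness is required in all principal directions simultaneously. Second, the determinantal-codimension heuristic does not by itself rule out that the affine preimage of $\Sigma$ along a principal subspace is a hyperplane (codimension one). The paper handles precisely this by an explicit computation: writing the condition for $\Sigma\cap P$ as a linear system in the principal variables $z^i_{1m}$, the coefficient matrix has entries $\tilde C^i_r(m)$ with diagonal zeros $\tilde C^i_r(r)=0$ and the symmetry $\tilde C^i_r(m)=\pm\tilde C^i_m(r)$; from this one sees the matrix can have rank $0$ (then $\Sigma\cap P=P$ and the complement is empty, hence ample) or rank $\geq 2$, but never rank $1$. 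That combinatorial ``rank never equals $1$'' argument is the actual content of the ampleness proof, and it is missing from your proposal.
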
 

  Recall that, if the corank $n-r=n-(2k+1)$ of the distribution 
$\mathcal{D}\subset TV$ is $1$, 
the tangent distribution $\mathcal{D}$ is an even-contact structure. 
 We should remark that, in this case, 
this theorem is equivalent to the theorem on McDuff's \textit{h}-principle 
for even-contact structures (see~\cite{mcduff}). 
 In this sense, Theorem~\ref{thma} in this paper 
can be regarded as a generalization of McDuff's \emph{h}-principle. 

  The first explicit generalization may be a distribution of type~$(3,5)$. 
 It is, by definition, a tangent distribution $\mathcal{D}$ of rank~$3$ 
on a manifold $V$ of dimension~$5$ that satisfies $\mathcal{D}^2=TV$. 
 When we deal with orientable distributions, we obtain the following 
as a corollary of Theorem~\ref{thma}. 
%
%
\begin{coro}\label{corA}
  Let $V$ be a manifold of dimension~$5$ possibly closed. 
  The manifold $V$ admits an orientable distribution of type~$(3,5)$ 
  if and only if it admits a trivial subbundle $\mathcal{E}\subset TV$ 
  of rank~$3$ of the tangent bundle. 
\end{coro}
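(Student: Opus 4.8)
The plan is to reduce the corollary to Theorem~\ref{thma} through a bundle-theoretic reformulation of the formal (almost maximally non-integrable) structure. Throughout, a distribution of type~$(3,5)$ on $V^5$ is precisely a maximally non-integrable distribution of derived length one and rank~$3$: here $k=1$ and the corank is $n-r=2$, so the condition $\mathcal{D}^2=TV$ says exactly that the two $2$-forms $d\alpha_1,d\alpha_2$ are pointwise linearly independent on $\mathcal{D}$, which is the maximal non-integrability condition. Correspondingly, an \emph{orientable} almost maximally non-integrable structure of rank~$3$ on $V^5$ amounts to an orientable rank-$3$ subbundle $\mathcal{D}\subset TV$ together with two pointwise linearly independent sections $\omega_1,\omega_2$ of $\Lambda^2\mathcal{D}^*$.

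The key step is a linear-algebra observation at the level of bundles. For a rank-$3$ bundle there is a natural isomorphism $\Lambda^2\mathcal{D}^*\cong\mathcal{D}\otimes\Lambda^3\mathcal{D}^*$; when $\mathcal{D}$ is orientable the line bundle $\Lambda^3\mathcal{D}^*$ is trivial, so $\Lambda^2\mathcal{D}^*\cong\mathcal{D}$. Hence specifying two pointwise independent sections of $\Lambda^2\mathcal{D}^*$ is the same as specifying two pointwise independent sections of $\mathcal{D}$ itself. I would then argue that an orientable rank-$3$ bundle $\mathcal{D}$ admits two pointwise independent sections if and only if it is trivial: two such sections split off a trivial rank-$2$ subbundle, giving $\mathcal{D}\cong\R^2\oplus L$ for a line bundle $L$, and orientability forces $w_1(L)=w_1(\mathcal{D})=0$, so $L$ is trivial and $\mathcal{D}\cong\R^3$; the converse is immediate. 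Thus an orientable rank-$3$ distribution on $V^5$ is almost maximally non-integrable of derived length one if and only if it is trivial.

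With this in hand the two implications follow quickly. For the forward direction, an orientable distribution $\mathcal{D}$ of type~$(3,5)$ is in particular almost maximally non-integrable (take $\omega_i=d\alpha_i$), so by the observation $\mathcal{D}$ is itself a trivial rank-$3$ subbundle and one sets $\mathcal{E}=\mathcal{D}$. For the converse, if $\mathcal{E}\subset TV$ is a trivial rank-$3$ subbundle, then $\mathcal{E}$ is orientable and, again by the observation, carries an almost maximally non-integrable structure of derived length one; Theorem~\ref{thma} then yields a genuine maximally non-integrable distribution, that is, a distribution of type~$(3,5)$.

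The one point requiring care is orientability of the output in the converse, since Theorem~\ref{thma} as stated asserts existence only. Here I would invoke that the \textit{h}-principle underlying Theorem~\ref{thma} keeps the genuine distribution homotopic, as a subbundle of $TV$, to the formal one from which it is built; because $\mathcal{E}$ is trivial, the resulting distribution lies in the homotopy class of a trivial bundle and is therefore trivial, hence orientable. This homotopy-class bookkeeping through the convex-integration construction is the main obstacle to a fully self-contained argument, whereas the bundle algebra of the preceding paragraphs is routine.
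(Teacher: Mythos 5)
Your proposal is correct and follows essentially the same route as the paper: the paper proves the more general Proposition~\ref{prop:ori} (for types $(2k+1,4k+1)$ and $(2k+1,4k+2)$) using the same contraction isomorphism $\mathcal{D}\cong\bigwedge^{2}\mathcal{D}^\ast$ induced by a trivialization of $\bigwedge^{3}\mathcal{D}^\ast$ and the same ``two independent sections plus orientability implies triviality'' argument, and then specializes to $k=1$. Your closing concern about orientability of the distribution produced by Theorem~\ref{thma} --- which you resolve by the homotopy statement in the underlying Theorem~\ref{thm:key} --- is a point the paper's own proof of Proposition~\ref{prop:ori} passes over silently, so this is added care rather than a deviation.
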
 
\noindent
 In Section~\ref{sec:proof}, 
we discuss a more general corollary of Theorem~\ref{thma} 
for orientable distributions than Corollary~\ref{corA}. 

  Then the result concerning the classification is described as follows. 
 It is also considered as a generalization 
of McDuff's \textit{h}-principle (see~\cite{mcduff}). 
%
%
\begin{thm}\label{thmb}
  Let $\mathcal{D}_1,\ \mathcal{D}_2\subset TV$ 
be maximally non-integrable distributions of derived length one 
and of odd rank on a manifold $V$. 
 If they are homotopic through almost maximally non-integrable distributions 
then they are isotopic. 
\end{thm}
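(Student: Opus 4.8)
The plan is to recognize Theorem~\ref{thmb} as the classification counterpart of the existence result Theorem~\ref{thma}, and to derive it from the parametric form of the same $h$-principle. The genuine objects---maximally non-integrable distributions of derived length one and odd rank---are the solutions of a first-order differential relation $\mathcal{R}$ on the space of tangent distributions: writing $\mathcal{D}=\ker(\alpha_1,\dots,\alpha_{n-r})$ locally, the condition that $(d\alpha_1)^k,\dots,(d\alpha_{n-r})^k$ be pointwise linearly independent depends only on the $1$-jet of $\mathcal{D}$ and is manifestly open and $\mathrm{Diff}$-invariant. The almost maximally non-integrable distributions are precisely the formal solutions, with the pairs $(\mathcal{D},(\omega_i))$ playing the role of $(\mathcal{D},(d\alpha_i))$. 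The first step is therefore to observe that the convex-integration argument establishing Theorem~\ref{thma}, being local and insensitive to parameters, upgrades to a parametric $h$-principle: the inclusion of the space of genuine solutions into the space of formal solutions is a weak homotopy equivalence. This is exactly the mechanism behind McDuff's classification of even-contact structures in~\cite{mcduff}, the corank-one case.

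Granting the parametric $h$-principle, the second step extracts a genuine homotopy. A homotopy of $\mathcal{D}_1$ to $\mathcal{D}_2$ through almost maximally non-integrable distributions is a path in the space of formal solutions joining two points of the subspace of genuine solutions. Applying the $h$-principle relative to the endpoints, that is the parametric statement with boundary data fixed at $t=0,1$, deforms this path rel its ends into a path lying entirely among genuine solutions. Thus $\mathcal{D}_1$ and $\mathcal{D}_2$ are joined by a smooth family $\{\mathcal{D}_t\}_{t\in[0,1]}$ of maximally non-integrable distributions of derived length one.

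The remaining and principal step is to promote this homotopy through genuine structures to an ambient isotopy carrying $\mathcal{D}_1$ to $\mathcal{D}_2$, which I expect to be the main obstacle. Because $\mathcal{R}$ is open and $\mathrm{Diff}$-invariant, the natural route is to subdivide $[0,1]$ by compactness so that each sub-family $\{\mathcal{D}_t\}_{t\in[t_j,t_{j+1}]}$ is a $C^0$-small deformation, to realize each such deformation by an ambient isotopy, and to compose. Realizing a short deformation is a Moser-type problem: one seeks a time-dependent vector field $X_t$ whose flow pulls the defining data of $\mathcal{D}_t$ back to that of $\mathcal{D}_{t_j}$, amounting to solving along the family an equation of the schematic form $L_{X_t}\alpha_i^{t}+\dot\alpha_i^{t}\equiv 0 \pmod{\mathcal{I}_t}$, where $\mathcal{I}_t$ is generated by $\alpha_1^{t},\dots,\alpha_{n-r}^{t}$. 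The nondegeneracy encoded in maximal non-integrability of the odd-rank distribution, namely the linear independence of the $(d\alpha_i)^k$, is what should permit one to solve this equation pointwise and then, by a partition-of-unity assembly, globally. The delicate point, and the reason the hypothesis of homotopy through formal structures cannot be dropped, is that even-contact and higher-corank distributions are not Gray-stable in the naive sense; the solvability must be organized so as to persist along the whole genuine family produced above and be kept fixed near the endpoints, so that the composed isotopy is genuinely ambient. Should the direct Moser argument meet a degeneracy, the robust fallback is to encode the genuine homotopy $\{\mathcal{D}_t\}$ as data on $V\times[0,1]$ and to apply the $h$-principle once more there, reading off the isotopy from the holonomy of an auxiliary solution transverse to the slices $V\times\{t\}$.
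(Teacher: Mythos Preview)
Your first two steps are exactly the paper's argument: apply the one-parametric $h$-principle (the paper's Theorem~\ref{thm:key}, proved via ampleness and Theorem~\ref{thm:HP_D-secs}) to deform the given path of almost maximally non-integrable distributions, rel endpoints, into a path of genuine ones. The paper stops there and declares $\mathcal{D}_0$ and $\mathcal{D}_1$ \emph{isotopic}.

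Your third step rests on reading ``isotopic'' as ``related by an ambient diffeotopy,'' and that is where the proposal diverges from the paper. In this paper (as in McDuff's treatment of the corank-one case) ``isotopic'' simply means \emph{homotopic through genuine structures of the same type}; no Moser/Gray argument is attempted or needed. Your own caveat points to why the stronger reading is untenable: even-contact structures are \emph{not} Gray-stable---the conjugacy class of the characteristic line field is an invariant of the structure that can vary along a homotopy of genuine even-contact structures---so a Moser-type vector field $X_t$ solving $L_{X_t}\alpha_i^t+\dot\alpha_i^t\equiv 0 \pmod{\mathcal{I}_t}$ will in general fail to exist. Thus the ``principal step'' you flag as the main obstacle is not a gap in the argument but an artifact of the interpretation; once ``isotopic'' is read as the paper intends, your proof is complete after step two and coincides with the paper's.
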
 
\noindent
 This implies that the isotopic classification 
of maximally non-integrable distributions of derived length one 
follows a ``topological'' classification. 
 In other words, all such structures are ``flexible'', 
and there is no ``rigid'' class. 
 The explicit classification is given by algebraic topology 
on each base manifold $V$. 

  These theorems are proved from the view point of the \textit{h}-principles. 
Gromov's convex integration method is applied to show the \emph{h}-principles. 
 As we mentioned above, these results are generalizations of 
McDuff's \textit{h}-principle for even-contact structures. 
 It is originally proved by using Gromov's convex integration method 
(see~\cite{mcduff}), 
although an alternative prof is given in~\cite{elmi_book}. 
 The proof in this paper is influenced by the original one. 

  Let us observe some related results around. 
 A completely non-integrable distribution of derived length one 
is a \emph{contact structure}\/ if it is of corank~$1$ and even rank. 
 The existence of a contact structure on a closed orientable $3$-manifold 
is proved by Martinet~\cite{martinet71} by a constructive method. 
 For higher-dimensional manifolds, 
it is proved by Borman, Eliashberg, and Murphy~\cite{boelmu} 
from the view point of the \textit{h}-principle 
that, if a manifold admits an almost contact structure, 
it admits a genuine contact structure. 
 One of the properties that make contact topology important 
is the existence of the rigid (tight) class. 
 It is proved by Eliashberg~\cite{eliash92} 
that contact structures on $3$-manifolds 
are divided into two classes, tight (rigid) and overtwisted (flexible). 
 For higher-dimensional manifolds, 
the overtwistedness is introduced in~\cite{boelmu}. 
 On the other hand, McDuff's \textit{h}-principle~\cite{mcduff} implies 
that there is no rigid class for even-contact structures. 
 In this sense, the result in this paper implies 
that there is no rigid class 
for maximally non-integrable distributions of derived length one of odd rank. 

  This paper is organized as follows. 
 In the following section, we introduce some basic notions for this paper. 
 First, we review something on tangent distributions, 
and then we introduce the \emph{h}-principles and convex integration method. 
 In Section~\ref{sec:diff-rel}, we set the problem to be solved 
in terms of the \textit{h}-principles. 
 Then we show the \textit{h}-principles by Gromov's convex integration method 
in Section~\ref{sec:pf_ethm}. 
 The proof is concluded in Section~\ref{sec:proof}. 

\smallskip

  The author is grateful to Goo Ishikawa for fruitful discussions.

\section{preliminaries}\label{sec:prelim}
  In this section, we discuss basic things 
which are needed in the proof of Theorems. 
 In Subsection~\ref{sec:distr}, we introduce tangent distributions on manifolds.
 In Subsection~\ref{sec:h-prin}, the notion of the \textit{h}-principles 
and the method of convex integration are introduced. 

\subsection{Tangent distribution}\label{sec:distr}
  First of all, we introduce the notion of tangent distributions on manifolds 
and something concerning the notion. 
 Let $M$ be a smooth manifold of dimension $n$. 
 A subbundle $\D\subset TM$ of the tangent bundle $TM$ 
with $r$-dimensional fibers 
is called a \emph{tangent distribution} (or \emph{distribution} for short) 
of rank $r$ on $M$. 
 Let $\sect{\D}$ be the set of all cross-sections (vector fields) 
of the subbundle $\D\to M$, and $\DD\subset \ub{TM}$ 
the sheaf of the vector fields. 
 Then $\D$ can be considered as a distribution 
of the $r$-dimensional tangent subspace $\D_p\subset T_pM$ 
at each point $p\in M$, 
or a $r$-dimensional plane field on $M$. 

  In this paper, we deal with completely non-holonomic distributions, 
which are defined as follows. 
 Let $\D$ be a distribution of rank~$r$ on an $n$-dimensional manifold $M$. 
 A distribution $\D\subset TM$ is said to be \emph{completely non-holonomic} 
if, for any local frame $\{X_1,\dots,X_k\}$ of $\D$, 
its iterated Lie brackets $X_i,[X_i,X_j],[X_i,[X_j,X_l]],\dots$ 
spans the tangent bundle $TM$. 
 This condition is observed from the view point of distributions 
derived by the Lie brackets as follows. 
 For a given local vector field $X\in \D$ defined 
on a neighborhood of a point $x\in M$, 
we have the germ $\ub{X}_x\in \DD(x)$ 
as an element of the stalk $\DD(x)$ at $x\in M$. 
 Then by inductively setting 
\begin{align*}
  \DD(x)^2&=([\DD,\DD]+\DD)_x \\
          &:=\operatorname{Span}\{[\ub{X}_x,\ub{Y}_x]+\ub{Z}_x\in\ub{TM}(x)
            \mid \ub{X}_x,\ub{Y}_x,\ub{Z}_x\in\DD(x)\}, \\
  \DD(x)^{l+1}&=([\DD,\DD^l]+\DD^l)_x \\ 
          &:=\operatorname{Span}\{[\ub{X}_x,\ub{Y}_x]+\ub{Z}_x\in\ub{TM}(x)
  \mid \ub{X}_x\in\DD(x),\ub{Y}_x,\ub{Z}_x\in\DD(x)^l\},\quad (l=2,3,\dots), 
\end{align*}
we have a flag 
$\DD\subset\DD^2\subset\dots\subset\DD^l\subset\dots\subset\ub{TM}$ 
of subsheaves. 
 The condition that the distribution $\D\subset TM$ is completely non-holonomic 
implies that $\DD^l=\ub{TM}$ holds for some $l\in\mathbb{N}$. 
 We have the subspace $\D^l_x\subset T_{x}M$ as 
$
  \D^l_x:=\{X_x\in T_{x}M\mid \ub{X}_x\in \DD(x)^l\} 
$. 
 If $\dim(\D^l_x)$ is constant for any $x\in M$, 
there corresponds a distribution $\D^l\subset TM$ on $M$. 
 It is called the \emph{derived distribution}\/ of $\D$. 
 In this paper, we deal with completely non-holonomic distributions 
of \emph{derived length one}. 
 In other words, completely non-holonomic distributions that satisfy $\D^2=TM$. 
 When the rank of such a tangent distribution $\mathcal{D}$ on $M$ is $r$ 
and the dimension of the manifold $M$ is $n$, 
then $\mathcal{D}$ is said to be \emph{of type~$(r,n)$}. 

  Now, we introduce basic property 
concerning completely non-holonomic distributions of derived length one. 
 Let $\D\subset TM$ be a distribution of rank~$r$ 
on an $n$-dimensional manifold $M$. 
 Let $\mathcal{S}(\D)\subset T^\ast M$ be the bundle of covectors 
that annihilate $\D$. 
 Note that $\mathcal{S}(\D)$ is a vector bundle of fiber dimension~$n-r$. 
 The distribution $\D$ can be locally described by $1$-forms 
$\omega_1,\omega_2,\dots,\omega_{n-r}\in\sect{T^\ast M}$ as 
$\D=\{\omega_1=0,\ \omega_2=0,\dots,\ \omega_{n-r}=0\}$. 
 We observe the choice of the basis 
$\mathcal{S}(\D)$ annihilating the distribution $\D$. 
 If $\D$ is completely non-holonomic with derived length one, 
then we can take a basis $\{\omega_1,\ldots,\omega_{n-r}\}$ as follows 
(see~\cite{art11} for example). 
%
%
\begin{prop}\label{prop:dbasis}
  Let $\D\subset TM$ be a distribution of rank~$r$ 
on an $n$-dimensional  manifold $M$. 
 The distribution $\D$ is completely non-holonomic with derived length one 
if and only if $\mathcal{S}(\D)$ has a local basis 
$\{\omega_1,\ \omega_2,\dots,\ \omega_{n-r}\}$ 
which satisfies the condition that $(n-r+2)$-forms
$\omega_1\wedge\omega_2\wedge\dots\wedge\omega_{n-r}\wedge d\omega_i$, 
$i=1,2,\dots,n-r$, 
are vanishing nowhere and  pointwise linearly independent. 
\end{prop}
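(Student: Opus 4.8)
The plan is to reduce the global, sheaf-theoretic condition $\D^2=TM$ to a pointwise statement in linear algebra, and then to match that statement with the stated wedge-product condition via a short computation with an adapted coframe.

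First I would fix a point $x\in M$ and a local basis $\omega_1,\dots,\omega_{n-r}$ of $\mathcal{S}(\D)$ near $x$ (such a basis exists because $\mathcal{S}(\D)$ is a vector bundle). Since $\D\subset\D^2$, the annihilator $(\D^2_x)^\perp\subset T^\ast_xM$ is automatically contained in $\mathcal{S}(\D)_x$. The key identity is Cartan's formula: for a $1$-form $\omega$ annihilating $\D$ and vector fields $X,Y\in\sect{\D}$ one has $\omega(X)\equiv\omega(Y)\equiv 0$, so $d\omega(X,Y)=-\omega([X,Y])$. In particular, evaluated at $x$, the quantity $d\omega_i|_x(X_x,Y_x)=-\omega_i([X,Y]_x)$ depends only on the point values $X_x,Y_x\in\D_x$; this tensoriality is the crucial observation, and verifying it carefully is the step requiring the most care, since $d\omega_i|_x$ as a full $2$-form depends on the $1$-jet of $\omega_i$, whereas its restriction to $\D_x$ does not.

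Granting this, a covector $\sum_i c_i\,\omega_i|_x\in\mathcal{S}(\D)_x$ lies in $(\D^2_x)^\perp$ exactly when $\sum_i c_i\,d\omega_i|_x(X_x,Y_x)=0$ for all $X,Y\in\sect{\D}$, that is, when $\sum_i c_i\,(d\omega_i)|_{\D_x}=0$ in $\Lambda^2\D_x^\ast$. Hence $\D^2_x=T_xM$ (derived length one at $x$) holds if and only if the restricted $2$-forms $(d\omega_1)|_{\D_x},\dots,(d\omega_{n-r})|_{\D_x}$ are linearly independent in $\Lambda^2\D_x^\ast$. Since $\D$ is completely non-holonomic with derived length one precisely when this holds at every point, this yields a clean pointwise criterion.

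It then remains to translate the linear independence of the $(d\omega_i)|_{\D_x}$ into the linear independence of the $(n-r+2)$-forms $\Omega\wedge d\omega_i$, where $\Omega:=\omega_1\wedge\cdots\wedge\omega_{n-r}$. To do this I would complete $\omega_1,\dots,\omega_{n-r}$ to a coframe $\theta^1,\dots,\theta^r,\omega_1,\dots,\omega_{n-r}$ adapted to $\D$ and expand each $d\omega_i$ in the resulting basis of $2$-forms. Wedging with $\Omega$ kills every term containing some $\omega_j$, so only the ``$\theta\wedge\theta$'' part of $d\omega_i$---which is exactly $(d\omega_i)|_{\D_x}$---survives, and the forms $\Omega\wedge\theta^a\wedge\theta^b$ with $a<b$ are linearly independent. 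Thus $\{\Omega\wedge d\omega_i\}$ is linearly independent if and only if $\{(d\omega_i)|_{\D_x}\}$ is, with nonvanishing being the special case $n-r=1$. Finally I would record that the condition is independent of the chosen basis: a change of basis by an invertible matrix $P$ replaces the family $\{\Omega\wedge d\omega_i\}$ by its image under $\det(P)\,P$, preserving linear independence. Therefore the existence of one good local basis is equivalent to $\D^2=TM$, which gives both implications of the proposition.
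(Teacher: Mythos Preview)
The paper does not prove this proposition; it is stated with a reference to \cite{art11} and no argument is given. So there is nothing in the paper to compare your approach against.

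Your argument is correct and is the standard one. The Cartan-formula step identifies, for any local basis $\omega_1,\dots,\omega_{n-r}$ of $\mathcal{S}(\D)$, the annihilator $(\D^2_x)^\perp\subset\mathcal{S}(\D)_x$ with the set of coefficient vectors $(c_i)$ for which $\sum_i c_i\,(d\omega_i)|_{\D_x}=0$; hence $\D^2_x=T_xM$ iff the restrictions $(d\omega_i)|_{\D_x}$ are independent. Your tensoriality remark is exactly what is needed: although $[X,Y]_x$ depends on $1$-jets, $d\omega_i$ is a $2$-form, so $d\omega_i(X,Y)_x$ depends only on $X_x,Y_x$, and every pair in $\D_x\times\D_x$ arises from sections of $\D$. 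The adapted-coframe computation then shows that wedging with $\Omega=\omega_1\wedge\cdots\wedge\omega_{n-r}$ is precisely the projection $\Lambda^2 T_x^\ast M\to\Lambda^2\D_x^\ast$ followed by an injection, so linear independence of $\{\Omega\wedge d\omega_i\}$ matches linear independence of $\{(d\omega_i)|_{\D_x}\}$; the ``vanishing nowhere'' clause is then automatic. Your closing observation that a change of basis by $P$ sends the family to $\det(P)\cdot P$ applied to it shows the condition is basis-independent, so ``has a local basis'' is equivalent to ``every local basis,'' which cleanly gives both implications.
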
 

  In addition, we impose further conditions to distributions 
that we are dealing with in this paper. 
 Let $\D\subset TM$ be a distribution of rank~$r$ 
on an $n$-dimensional manifold $M$. 
 Let $\{\omega_1,\ \omega_2,\dots,\omega_{n-r}\}$ 
be the defining $1$-forms of $\D$. 
 When $\D$ is of odd rank~$r=2k+1$, 
we say $\D$ is \emph{maximally non-integrable}\/ 
if it satisfies the following condition: 
\begin{itemize}
\item $\omega_1\wedge\omega_2\wedge\dots\wedge\omega_{n-2k-1}
  \wedge(d\omega_i)^{k}$,\ 
  $i=1,2,\dots,n-2k-1$,\ are pointwise linearly independent.  
\end{itemize}
 Note that such distributions are completely non-holonomic 
with derived length one from Proposition~\ref{prop:dbasis}. 
 When the rank of $\D$ is $r=3$, such notions are equivalent 
to completely non-holonomic with derived length one. 
 The dimension~$n$ of manifolds for such distributions of rank~$2k+1$ 
should be $2k+2\le n\le 2(2k+1)$. 

%
%
\begin{expl}
(1) A contact structure $\xi$ on a $(2n+1)$-dimensional manifold $M$ 
is completely non-holonomic with derived length one. 
 Actually, $\xi\subset TM$ is a distribution of corank~$1$ on $M$ 
which is completely non-integrable by definition. 
 By the Darboux theorem, it is locally contactomorphic 
to the standard contact structure $\xi_0=\{\alpha_0=0\}$, 
$\alpha_0=dz-\sum_{i=1}^ny_idx_i$ on $\R^{2n+1}$ 
with coordinates $(z,x_1,y_1,\dots,x_n,y_n)$. 
 This $\xi_0$ is spanned by $2n$ vector fields 
$\uvv{x_1}+y_1\uvv z, \uvv{y_1},\dots,\uvv{x_n}+y_n\uvv z, \uvv{y_n}$. 
 Since $[\uvv{x_i}+y_i\uvv z, \uvv{y_i}]=\uvv z$, we have $\xi^2=TM$. 
 We should remark that, from the same discussion, an even-contact structure is 
also completely non-holonomic with derived length one 
and maximally non-integrable.  \\
(2)\ The distribution $\D=\{dy-z_1dx_1=0\}$ on $\R^5$ 
with coordinates $(x_1,x_2,y,z_1,z_2)$ 
is completely non-holonomic with derived length one. 
 However, it is not completely non-integrable. \\
(3)\ The canonical distribution on the jet space $J^1(\R,\R^k)\cong\R^{2k+1}$ 
is completely non-holonomic with derived length one. 
 Let $(x,y_1,\dots,y_k,z_1,\dots,z_k)\in J^1(\R,\R^k)$ be the coordinates, 
where $z_i$ corresponds to $d y_i/d x$. 
 The canonical distribution $\D_0$ is given as 
\begin{equation*}
  \D_0:=\{\alpha_1=0,\dots,\alpha_k=0\},\quad 
  \text{where}\ \alpha_i:=dy_i-z_idx.
\end{equation*}
 It is a distribution of rank~$k+1$, or corank~$k$, 
spanned by the following vector fields: 
\begin{equation*}
  \uv x+\sum_{j=1}^kz_j\uv{y_j},\ \uv{z_1},\ \dots,\ \uv{z_k}. 
\end{equation*}
 Since $\lft[\uvv x+\sum_{j=1}^kz_j\uvv{y_j},\ \uvv{z_i}\rgt]=\uvv{y_i}$, 
$i=1,2,\dots,k$, we have $\D_0^2=T\R^{2k+1}$. 
 Note that, if $k=1$, it is the standard contact structure on $\R^3$. 
 If $k=2$, it is completely non-integrable. 
 However, for $k>2$ it is not completely non-integrable. 
\end{expl}

  In this paper we deal with maximally non-integrable distributions of odd rank.

\subsection{Homotopy principle and Convex integration}
\label{sec:h-prin}
  We introduce the notion of the \textit{h}-principles 
in Section~\ref{sec:diff_rel}. 
 It is a key tool to show Theorems~\ref{thma} and~\ref{thmb}. 
 In order to show that our objects satisfy the \textit{h}-principles, 
we use the convex integration method due to Gromov. 
 In Section~\ref{sec:cvx_itgr}, we introduce that method. 
 In order to make this paper self-contained, 
some basic things that are needed in this paper are defined there. 
 Readers should refer to the literature 
for further study on the \textit{h}-principles 
(see~\cite{gromov_pdr}, \cite{elmi_book}, \cite{madachi}, \cite{spring}). 

\subsubsection{Differential relations and homotopy principle}
\label{sec:diff_rel}
  We introduce a philosophy to ``solve'' some ``problems'' 
related to differentiation. 
  In other words, we introduce the notion of the \emph{h}-principle 
for partial differential relations. 

  First, we review basic notions concerning fibrations and jets. 
 Let $p\colon X\to V$ be a fibration over a manifold $V$. 
 The \emph{$r$-jet extension} $p^r\colon X^{(r)}\to V$ 
of the fibration $p\colon X\to V$ 
is defined as the manifold $X^{(r)}$ consisting of all $r$-jets 
of cross-sections $V\to X$ of the fibration at all $v\in V$ 
and the projection $p^r\colon X^{(r)}\to V$. 
 $X^{(r)}$ is called the \emph{$r$-jet space}. 
 The cross-section $J^r_f\colon V\to X^{(r)}$ of $p^r\colon X^{(r)}\to V$ 
is the $r$-jet extension of a cross-section $f\colon V\to X$. 

  Then, we introduce a description method for certain problems. 
 Let $p\colon X\to V$ be a fibration. 
 A \emph{differential relation} of order $r$ for sections 
of $p\colon X\to V$ 
is defined as a subset $\mathcal{R}$ of the $r$-jet space $\rjet X$. 
 For example, a system of differential equations is regarded 
as a subset of the jet space $J^r(\R^n,\R^m)$, 
that is, a differential relation. 
 There are two kinds of solutions to a differential relation. 
 A \emph{formal solution}\/ 
to the differential relation $\mathcal{R}\subset\rjet{X}$ 
is defined to be a section $F\colon V\to\rjet{X}$ 
of the fibration $p^r\colon \rjet{X}\to V$ 
that satisfies $F(V)\subset\mathcal{R}$. 
 Let $\secc\mathcal{R}$ denote the space of formal solution to $\mathcal{R}$. 
 On the other hand, a \emph{genuine solution}\/ to $\mathcal{R}\subset\rjet{X}$ 
is defined to be a section $f\colon V\to X$ of $p\colon X\to V$ 
whose $r$-jet satisfies $(J^r_f)(V)\subset\mathcal{R}$. 
 Let $\operatorname{Sol}\mathcal{R}$ denote the space of genuine solutions 
to $\mathcal{R}$. 

  Some examples of differential relations come from singularity theory. 
 Roughly speaking, in some cases, 
a singular point 
is a point 
where certain expressions related to derivatives vanish. 
 Then, to singular points, there corresponds the set $\Sigma\subset\rjet X$ 
called the \emph{singularity}. 
 Setting $\mathcal{R}:=\rjet X\setminus\Sigma$, 
we have an open differential relation. 
 In many cases, it is important for singularity theory 
to solve such differential relations. 

  Now, we introduce the notion of the homotopy principle. 
 In general, the formal solvability of a differential relation 
is just a necessary condition 
for the genuine solvability of the differential relation. 
 In some cases, the formal one is sufficient for the genuine solvability. 
 The notion, the homotopy principle, is introduced by Gromov and Eliashberg 
to formalize such properties (see~\cite{groeli71}). 
 Let $p\colon X\to V$ be a fibration, 
and $\mathcal{R}\subset \rjet X$ a differential relation. 
 $\mathcal{R}$ is said to satisfy the \emph{homotopy principle}\/ 
(or \emph{h-principle}\/ for short) 
if every formal solution to $\mathcal{R}$ is homotopic in $\secc{\mathcal{R}}$ 
to a genuine solution to $\mathcal{R}$. 
 In this paper, we use the notion to show the existence of a genuine solution. 
 In addition to that, 
there are some flavors of the \textit{h}-principle (see~\cite{elmi_book}). 
 In this paper we introduce one of them. 
 The differential relation $\mathcal{R}\subset\ojet{X}$ is said to satisfy 
the \emph{one-parametric \textit{h}-principle}\/ 
if every family $\{f_t\}_{t\in[0,1]}$ of formal solutions to $\mathcal{R}$ 
between genuine solutions $f_0,\ f_1$ 
can be deformed inside $\secc{\mathcal{R}}$ 
to a family $\{\tilde{f}_t\}_{t\in[0,1]}$ of genuine solutions to $\mathcal{R}$ 
keeping both the ends $f_0,\ f_1$. 
 The notion is used to classifications. 

  In general, it is difficult to show 
if a differential relation satisfies the \emph{h}-principles. 
 For each relation, we should apply each suitable method. 
 We introduce one of such methods in the next section. 


\subsubsection{Convex integration method}\label{sec:cvx_itgr}
  In this section, we introduce a method due to Gromov 
to show the \emph{h}-principle 
for certain differential relations, 
which is called the \emph{convex integration} theory. 

  First, we recall ampleness of subsets. 
 Let $P$ be an affine space, and $\Omega\subset P$ a subset. 
 The minimum convex set that include $\Omega$ is called the \emph{convex hull}\/
of $\Omega$. 
 The subset $\Omega\subset P$ is said to be \emph{ample}\/ 
if the convex hull of each path-connected component of $\Omega$ is P. 
 The empty set is also defined to be ample. 

  In order to define the ampleness of differential relations, 
we first introduce the principal directions for fibrations. 
 Let $p\colon X\to V$ be a fibration over an $n$-dimensional manifold $V$ 
with the fiber dimension~$q$. 
 For the $1$-jet $p^1\colon \ojet X\to V$, 
there exists the natural projection $p^1_0\colon\ojet{X}\to X^{(0)}=X$, 
which is an affine bundle. 
 Let 
\begin{equation*}
  E_x:=(p^1_0)^{-1}(x),\quad x\in X
\end{equation*}
denote the fiber of $p^1_0\colon \ojet X\to  X$ over $x\in X$, 
and $\vertt_x\subset T_xX$ the $q$-dimensional tangent space at $x\in X$ 
of the fiber $p^{-1}(p(x))\subset X$ of $p\colon X\to V$ over $p(x)\in V$. 
 Then the fiber $E_x\subset X$ can be identified with 
\begin{equation*}
  \operatorname{Hom}(T_{p(x)}V,\vertt_x)
  \cong\operatorname{Hom}(\mathbb{R}^n,\R^q)
  \cong\{q\times n\ \text{matrices}\} 
\end{equation*}
as follows. 
 As $\ojet{X}$ is the $1$-jet space, a point of $\ojet{X}$ is regarded 
as a non-vertical $n$-dimensional subspace $P_x\subset T_xX$, 
where ``non-vertical'' implies that $P_x$ is transverse 
to $\vertt_x\subset T_xX$. 
 Then, regarding $P_x\subset T_xX\cong T_{p(x)}V\oplus\vertt_x$ as a graph, 
there corresponds a linear mapping. 
 Then, for a fixed point $x\in X$, 
the fiber $E_x\subset\ojet{X}$ is identified with 
$\operatorname{Hom}(T_{p(x)}V, \vertt_x)$. 
 Now we define the principal directions for fibrations. 
 Suppose that a hyperplane $\tau\subset T_{p(x)}V$ 
and a linear mapping $l\colon \tau\to \vertt_x$ are given. 
 For these $\tau$, $l$, 
let $P^l_\tau\subset E_x$ denote the affine subspace of $E_x$ defined as 
\begin{equation*}
  P^l_\tau:=\{L\in\operatorname{Hom}(T_{p(x)}V, \vertt_x)\mid L|_\tau=l\}
  \subset E_x. 
\end{equation*}
 Such affine subspaces of $E_x$ are said to be \emph{principal}. 
 Note that the principal subspaces are parallel $q$-dimensional affine spaces 
if the hyperplane $\tau\subset T_{p(x)}V$ is fixed. 
 The direction of the principal subspaces $P^l_\tau$ determined by $\tau$ 
is called the \emph{principal direction}. 

  Now we define the ampleness of differential relations. 
 Let $p\colon X\to V$ a fibration 
and $\mathcal{R}\subset\ojet{X}$ a differential relation. 
 A differential relation $\mathcal{R}$ is said to be \emph{ample}\/ 
if the intersection of $\mathcal{R}$ with any principal subspaces is ample 
in the affine fiber. 

  Then the key tool in this paper is the following theorem due to Gromov 
(see~\cite{gromov73}, \cite{gromov_pdr}, \cite{elmi_book}, \cite{madachi}, 
\cite{spring}). 

%
%
\begin{thrm}\label{thm:HP_apl_dr}
  Let $p\colon X\to V$ be a fibration and 
$\mathcal{R}\subset\ojet X$ an open differential relation. 
 If $\mathcal{R}\subset \ojet X$ is ample 
then $\mathcal{R}$ satisfies the \textit{h}-principle. 
 In addition, under this condition, $\mathcal{R}$ also satisfies 
the one-parametric \textit{h}-principle. 
\end{thrm}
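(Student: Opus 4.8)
The plan is to prove Theorem~\ref{thm:HP_apl_dr} by Gromov's convex integration scheme, in which the entire difficulty is concentrated in a single one-dimensional lemma, and the general statement is then assembled by an inductive argument that integrates successively along principal directions. I would first reduce the global problem to a one-dimensional one by choosing, chart by chart, a coordinate decomposition adapted to the principal subspaces, and then run the one-dimensional procedure one free direction at a time.

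First I would establish the \emph{fundamental one-dimensional lemma}. Suppose the base is an interval and we are given a formal solution whose underlying section has value $g$ and whose ``formal derivative'' $v(t)$ lies, for each $t$, in the intersection of $\mathcal{R}$ with the relevant principal subspace, call it $\Omega_t$. Since $\mathcal{R}$ is ample, the convex hull of the path-connected component of $\Omega_t$ containing the relevant data is the whole affine fiber, so $v(t)$ lies in the interior of that convex hull and can be written as an average $v(t)=\int_0^1 h(t,s)\,ds$ of a loop $h(t,\cdot)$ taking values in $\Omega_t$. Replacing the derivative of $g$ by a field that runs $N$ times rapidly around the loop $h(t,\cdot)$ and integrating, one obtains for large $N$ a genuine section whose derivative lies in $\Omega_t$ and which is $C^0$-close to $g$. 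Openness of $\mathcal{R}$ then guarantees that the $1$-jet of this section lands in $\mathcal{R}$, and letting $N\to\infty$ furnishes a homotopy inside $\secc{\mathcal{R}}$ from the formal solution to the genuine one.

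Next I would globalize. Fix a fine triangulation (or handle decomposition) of $V$ together with a system of coordinate directions subordinate to it, and argue by induction over the skeleta and over the coordinate directions. To improve the section along a new direction over a cell, I apply the one-dimensional lemma in that single direction, treating the remaining coordinates as parameters; the hypothesis that $\mathcal{R}$ is ample in \emph{every} principal subspace is exactly what legitimizes each such one-dimensional step, since a principal subspace is obtained by freezing all derivative data except along one direction. The $C^0$-smallness supplied by the lemma is used both to keep the $1$-jet inside the open set $\mathcal{R}$ and to control the disturbance introduced in the directions already treated. The main obstacle is precisely this bookkeeping: integrating along a new direction perturbs the partial derivatives arranged along the previously handled directions, and one must verify that the accumulated $C^0$ error stays small enough that the section remains in $\mathcal{R}$ and that at the end the fully integrated smooth section $f$ satisfies $J^1_f(V)\subset\mathcal{R}$, while depending continuously on the initial formal data.

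Finally, for the one-parametric statement I would run the identical scheme with the parameter $t\in[0,1]$ adjoined to the base. Near the endpoints $t=0$ and $t=1$, where $f_0$ and $f_1$ are already genuine solutions, no integration is performed, so the deformation fixes the ends; ampleness of $\mathcal{R}$ is unaffected by the extra parameter, so the same inductive convex integration produces a homotopy of genuine solutions relative to $f_0$ and $f_1$, which is exactly the one-parametric \textit{h}-principle.
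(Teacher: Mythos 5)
The paper itself contains no proof of Theorem~\ref{thm:HP_apl_dr}: it is quoted as Gromov's theorem, with pointers to \cite{gromov73}, \cite{gromov_pdr}, \cite{elmi_book}, \cite{madachi}, \cite{spring}. So your attempt can only be measured against the standard proof in those references, and at the level of architecture it matches it exactly: a one-dimensional lemma proved by rapid oscillation around loops, globalization by induction over a decomposition of $V$ and over principal (coordinate) directions, and the one-parametric statement obtained by adjoining the parameter to the base and performing no integration near the ends.

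There is, however, a genuine error at the heart of your one-dimensional lemma: you have swapped the roles of the formal derivative $v(t)$ and the true derivative $g'(t)$ of the underlying section. As written, you invoke ampleness to conclude that $v(t)$ lies in the convex hull of the relevant component of $\Omega_t$ and to represent it as a loop average $v(t)=\int_0^1 h(t,s)\,ds$; but $v(t)$ lies in $\Omega_t$ by the very definition of a formal solution, so this invocation is vacuous (the constant loop would do), and ampleness is never actually used. The step where ampleness is indispensable is the opposite one: the true derivative $g'(t)$ is an arbitrary point of the affine fiber, with no a priori relation to $\mathcal{R}$, and ampleness is precisely what guarantees that $g'(t)$ lies in the convex hull of the path component of $\Omega_t$ containing $v(t)$, hence equals the average $\int_0^1 h(t,s)\,ds$ of a loop $h(t,\cdot)$ taking values in that component. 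Only with this choice is the integral of the rapidly oscillating field $C^0$-close to $g$; with your choice (loop averaging to $v(t)$) the integrated section is close to $g(0)+\int_0^t v(\tau)\,d\tau$, which is far from $g$ whenever $v\neq g'$ --- that is, in every case where there is anything to prove. The claimed $C^0$-closeness then fails, and with it everything that depends on it: the openness argument keeping the $1$-jet in $\mathcal{R}$ (the admissible set $\Omega_t$ is attached to the fiber over the value of the section, so the section must not drift), the control of previously treated directions in your induction, and the construction of the homotopy of formal solutions, which is not obtained ``by letting $N\to\infty$'' but by connecting $v(t)$ to the loop values inside their common path component and exploiting $C^0$-closeness together with openness of $\mathcal{R}$. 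The fix is local to the statement of the lemma, but as proposed the key step would fail.
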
 

%
%
\begin{rmk}
  Gromov showed by this method 
that other flavors of the \textit{h}-principles also holds 
from the ampleness of open differential relations. 
(see~\cite{gromov73}, \cite{elmi_book})
\end{rmk}

  As we see in Section~\ref{sec:diff_rel}, 
singularity theory is a rich source of open differential relations. 
 Let $p\colon X\to V$ a fibration. 
 A singularity $\Sigma\subset\ojet{X}$ is said to be \emph{thin}\/ 
if, at any point $a\in\Sigma$, the intersection $P\cap \Sigma$ 
with any principal subspace $P$ through $a\in\Sigma$ 
is a stratified subset of dimension greater than or equal to $2$ in $P$. 
 When the singularity $\Sigma\subset\ojet{X}$ is thin, 
the open differential relation $\mathcal{R}:=\ojet{X}\setminus\Sigma$ is ample. 
 Then, as a corollary of Theorem~\ref{thm:HP_apl_dr}, we obtain the following. 
%
%
\begin{cor}
  Let $p\colon X\to V$ a fibration. 
 If a singularity $\Sigma\subset\ojet{X}$ is thin, 
then the differential relation $\mathcal{R}:=\ojet{X}\setminus\Sigma$ 
satisfies the \textit{h}-principle. 
 In addition, $\mathcal{R}$ also satisfies 
the one-parametric \textit{h}-principle under the same condition. 
\end{cor}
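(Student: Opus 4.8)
The plan is to deduce both assertions at once from Theorem~\ref{thm:HP_apl_dr} by verifying its two hypotheses for the differential relation $\mathcal{R}:=\ojet{X}\setminus\Sigma$: that $\mathcal{R}$ is an \emph{open} differential relation and that it is \emph{ample}. Since Theorem~\ref{thm:HP_apl_dr} delivers the \textit{h}-principle and the one-parametric \textit{h}-principle simultaneously, once openness and ampleness are in hand no separate argument is required for the second statement. Openness is immediate: a singularity $\Sigma\subset\ojet{X}$ is a closed stratified subset, so its complement $\mathcal{R}=\ojet{X}\setminus\Sigma$ is open in $\ojet{X}$, whence $\mathcal{R}$ is an open differential relation as required.

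The heart of the proof is ampleness, and this is exactly where thinness is used. Ampleness is a fiberwise condition, so I must show that for every principal subspace $P=P^l_\tau$ the intersection $\mathcal{R}\cap P$ is ample in the affine fiber $E_x$. Fixing such a $P$, which is a $q$-dimensional affine space, we have $\mathcal{R}\cap P=P\setminus(\Sigma\cap P)$. The thinness hypothesis ensures that $\Sigma\cap P$ is a stratified subset of $P$ of sufficiently high codimension, namely at least two. The relevant convex-geometric fact is then that the complement in an affine space of a stratified subset of codimension at least two is path-connected and has convex hull equal to the whole space: such a set is too small to separate $P$, and a connected subset that is dense in $P$ automatically has convex hull $P$. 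Consequently $\mathcal{R}\cap P$ consists of a single path-connected component whose convex hull is $P$, so it is ample; since $P$ was an arbitrary principal subspace, $\mathcal{R}$ is ample.

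With $\mathcal{R}$ shown to be both open and ample, Theorem~\ref{thm:HP_apl_dr} applies directly and yields the \textit{h}-principle together with the one-parametric \textit{h}-principle for $\mathcal{R}$, which completes the proof. The only step carrying genuine content is the ampleness verification: although the underlying fact about complements of thin sets is elementary in a single fiber, the point requiring care is that it must hold uniformly across all principal subspaces $P^l_\tau$, respecting the stratified structure of $\Sigma\cap P$, so that path-connectedness and fullness of the convex hull are available simultaneously on every affine fiber.
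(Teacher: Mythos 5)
Your proof is correct and takes essentially the same route as the paper, which derives the corollary by noting that thinness makes $\mathcal{R}=\ojet{X}\setminus\Sigma$ ample (the complement in each principal subspace of a stratified subset of codimension at least two is path-connected and dense, hence has full convex hull) and then invoking Theorem~\ref{thm:HP_apl_dr} for both the \textit{h}-principle and its one-parametric version. One small remark: the paper's definition of ``thin'' literally says ``dimension greater than or equal to $2$'' where codimension is meant, and your reading implicitly uses the correct (intended) codimension condition.
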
 

  Next, we introduce the homotopy principle for differential sections 
for a linear differential operator $\mathcal{F}$, following~\cite{elmi_book}. 
 Let $p_x\colon X\to V$ and $p_z\colon Z\to V$ be vector bundles 
over a manifold $V$. 
 We first introduce an operator 
$\mathcal{F}\colon \sect{\ojet{X}}\to \sect Z$ 
called a first order linear differential operator as follows. 
 Let $F\colon\ojet{X}\to Z$ be a fiberwise homomorphism 
between vector bundles over $V$. 
 Then there corresponds a mapping $\tilde{F}\colon \sect{\ojet{X}}\to \sect Z$ 
as $\sigma\mapsto F\circ \sigma$ 
for a section $\sigma$ of the jet extension $(p_x)^1\colon\ojet{X}\to V$. 
 By the composition with the differential operator 
$J^1\colon\sect X\to\sect{\ojet{X}}$, 
we have the linear operator 
\begin{equation*}
  \mathcal{F}:=\tilde{F}\circ J^1\colon\sect X\to\sect Z. 
\end{equation*}
 The operators of this type are called 
the \emph{first order linear differential operators}. 
 The bundle homomorphism $F$ 
for a first order linear differential operator $\mathcal{F}$ 
is called the \emph{symbol}\/ of $\mathcal{F}$. 
 Let $\symb\mathcal{F}=F$ denote it. 

  For example, the exterior derivative of differential $p$-forms 
\begin{equation*}
  d\colon\sect{\bigwedge^pT^\ast V}\to \sect{\bigwedge^{p+1}T^\ast V}
\end{equation*}
is a first order linear differential operator. 
 Its symbol $D:=\symb d$ is a fiberwise epimorphism 
$D\colon \ojet{(\bigwedge^pT^\ast V)}\to \bigwedge^{p+1}T^\ast V$. 

  We now introduce the homotopy principle for differential sections. 
 Let $p_x\colon X\to V$ and $p_z\colon Z\to V$ be vector bundles 
over a manifold $V$, 
and $\mathcal{F}\colon \sect X\to \sect Z$ 
a first order linear differential operator. 
 An \emph{$\mathcal{F}$-section}\/ is, by definition, 
a section $s_z\colon V\to Z$ of the bundle $p_z\colon Z\to V$ 
for which there exists a section $s_x\colon V\to X$ of $p_x\colon X\to V$ 
that satisfies $s_z=\mathcal{F}(s_x)$. 
 For example, when $\mathcal{F}=d$ is the exterior derivative 
$d\colon \sect{\bigwedge^{p-1}T^\ast V}\to \sect{\bigwedge^pT^\ast V}$ 
of differential $(p-1)$-forms, 
the $\mathcal{F}$-sections are exact differential $p$-forms. 
 In order to state a property of differential sections, 
we introduce the following notation. 
 For the given subset $S\subset Z$, let $\secc_\mathcal{F}(Z\setminus S)$ denote 
the space of $\mathcal{F}$-sections $s\colon V\to Z$ of $p_z\colon Z\to V$ 
that satisfies $s(V)\subset Z\setminus S$. 
 Then the following theorem concerning the \textit{h}-principle 
for differential sections is introduced in~\cite{elmi_book}. 
%
%
\begin{thrm}\label{thm:HP_D-secs}
  Let $\mathcal{F}\colon \sect X\to \sect Z$ 
be a first order linear differential operator 
for vector bundles $p_x\colon X\to V$ and $p_z\colon Z\to V$ 
over a manifold $V$. 
 Assume that the symbol $F=\symb\mathcal{F}\colon\ojet{X}\to Z$ 
is fiberwise epimorphic. 
 For the given subset $S\subset Z$, set 
\begin{equation*}
  \Sigma:=F^{-1}(S)\subset\ojet{X}.
\end{equation*}
 If the differential relation $\mathcal{R}:=\ojet{X}\setminus\Sigma$ 
satisfies the \textit{h}-principle, 
then the inclusion $\dsec{Z\setminus S}\hookrightarrow\sect{Z\setminus S}$ 
also satisfies the \textit{h}-principle. 
 In other words, any section $s_0\in\sect{Z\setminus S}$ is homotopic 
in $\sect{Z\setminus S}$ to a section $s_1\in\dsec{Z\setminus S}$. 
 In addition, the same claim also holds 
for the one-parametric \textit{h}-principle. 
\end{thrm}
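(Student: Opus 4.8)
The plan is to deduce this theorem directly from the assumed $h$-principle for the open differential relation $\mathcal{R}=\ojet{X}\setminus\Sigma$, by transferring a genuine solution upstairs (a section of $X$) into an $\mathcal{F}$-section downstairs (a section of $Z$). The key structural fact I would exploit is the commutative relationship between the operator $\mathcal{F}=\tilde{F}\circ J^1$ and its symbol $F$: for any section $s_x\colon V\to X$, the composition $F\circ J^1_{s_x}$ equals $\mathcal{F}(s_x)$ as a section of $Z$. This identity is what lets us translate conditions on jets of $s_x$ (living in $\ojet{X}$) into conditions on the $\mathcal{F}$-section $\mathcal{F}(s_x)$ (living in $Z$), and vice versa.

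First I would set up the dictionary between formal/genuine solutions of $\mathcal{R}$ and sections avoiding $S$. Given a section $s\in\sect{Z\setminus S}$, I would use the fiberwise epimorphicity of $F\colon\ojet{X}\to Z$ to lift $s$ to a formal section $F\colon V\to\ojet{X}$ (an abuse of the name $F$, so I would rename this lift) with $F\circ(\text{lift})=s$ pointwise; because $s$ avoids $S$, the lift lands in $F^{-1}(Z\setminus S)=\ojet{X}\setminus F^{-1}(S)=\mathcal{R}$, so it is a formal solution of $\mathcal{R}$. Conversely, any genuine solution $f\colon V\to X$ of $\mathcal{R}$ has $J^1_f(V)\subset\mathcal{R}$, so $F\circ J^1_f=\mathcal{F}(f)$ avoids $S$ and is an $\mathcal{F}$-section in $\dsec{Z\setminus S}$. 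Thus the $h$-principle for $\mathcal{R}$ gives us, from our initial $s_0$, first a formal solution, then a homotopy in $\secc{\mathcal{R}}$ to a genuine solution $f_1$; pushing this homotopy down through $F$ produces a homotopy in $\sect{Z\setminus S}$ from $s_0$ (up to adjusting the starting lift) to $\mathcal{F}(f_1)\in\dsec{Z\setminus S}$.

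The step I expect to require the most care is controlling the projection of the formal homotopy back down to $Z$ so that it honestly connects $s_0$ to the $\mathcal{F}$-section, rather than merely some section whose lift we chose. Two issues arise: the lift of $s_0$ is not canonical (the fibers of $F$ are positive-dimensional affine spaces), and applying $F$ to a path of formal solutions in $\ojet{X}$ gives a path in $Z\setminus S$, but I must verify that at the formal-solution stage the image $F\circ(\text{formal section})$ still equals the intended section and that the endpoint really is $\mathcal{F}(f_1)$ and not just $F$ composed with a $1$-jet that fails to be holonomic. The resolution is that at the genuine-solution end the section is holonomic ($J^1_{f_1}$), so its image under $F$ is genuinely an $\mathcal{F}$-section, while along the homotopy we only need a path in $\sect{Z\setminus S}$, for which a (possibly non-holonomic) formal path suffices.

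Finally, for the one-parametric statement I would run the identical argument with the one-parametric $h$-principle for $\mathcal{R}$ in place of the ordinary one: a family $\{s_t\}$ of $\mathcal{F}$-sections in $\dsec{Z\setminus S}$ with prescribed endpoints lifts to a family of genuine solutions of $\mathcal{R}$ with the matching endpoints, the one-parametric $h$-principle deforms the connecting family of formal solutions into genuine ones rel endpoints, and projecting by $F$ yields the desired deformation through $\mathcal{F}$-sections. The only additional bookkeeping is parameter-wise compatibility of the lifts, handled by the same epimorphicity of $F$ applied continuously in the parameter.
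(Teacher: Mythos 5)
Your proof is correct, and there is in fact nothing in the paper to compare it against: Theorem~\ref{thm:HP_D-secs} is stated there without proof, as a result quoted from~\cite{elmi_book}. Your argument is precisely the standard one behind that citation: lift $s_0$ through the fiberwise epimorphic symbol to a formal solution of $\mathcal{R}$ (the lift exists because $F^{-1}(s_0(V))$ is an affine subbundle of $\ojet{X}$ with contractible, hence section-admitting, fibers, and it satisfies $F\circ\sigma_0=s_0$ on the nose, so the ``adjustment of the starting lift'' you worry about is not needed), apply the assumed \textit{h}-principle to move $\sigma_0$ inside $\secc{\mathcal{R}}$ to a holonomic section $J^1_f$, and push everything down by $F$, the identity $F\circ J^1_f=\mathcal{F}(f)$ showing the terminal section lies in $\dsec{Z\setminus S}$; the one-parametric statement follows by the same scheme rel endpoints, the relative lifts existing for the same affine-bundle reason.
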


\section{Key theorem and corresponding differential relation}
\label{sec:diff-rel}
  We introduce the key theorem to be proved for the proof of 
Theorems~\ref{thma} and~\ref{thmb} in Subsection~\ref{sec:keythm}. 
  In order to show the theorem by the \emph{h}-principles, 
we define the differential relation for Theorem~\ref{thm:key} 
in Subsection~\ref{sec:def-diffrel}. 
 Then we show the ampleness of the differential relations 
in Subsection~\ref{sec:pf_ethm}. 

\subsection{Key theorem for Theorems~\ref{thma} and~\ref{thmb}}
\label{sec:keythm}
  We introduce the key theorem in this paper. 
 The method to prove the theorem is the \textit{h}-principles. 
 We need to translate the essence of Theorems~\ref{thma} and~\ref{thmb} 
to the philosophy of the \textit{h}-principle. 
 First,  we introduce the formal structure 
for the structures we are dealing with. 
 Then we formulate the theorem to be proved. 

  In terms of the \textit{h}-principle, roughly speaking, 
the formal things are obtained from genuine things 
by forgetting differentiations. 
 The structure dealt in Theorems~\ref{thma} and~\ref{thmb} 
is maximally non-integrable distributions of derived length one and of odd-rank 
(see Section~\ref{sec:distr} for definition). 
 For such structures, the formal structures are introduced as follows. 
 Let $V$ be a manifold of dimension~$n$, 
and $\D$ a distribution of odd rank~$2k+1$. 
 We say $\D$ to be \emph{almost maximally non-integrable}\/ 
if there exists an $(n-2k-1)$-tuple 
$\{\omega_1,\ \omega_2,\dots,\omega_{n-2k-1}\}$ of $2$-forms 
that satisfies that $(\omega_1)^k,\dots,(\omega_{n-2k-1})^k$ are 
linearly independent on $\D$ at each point of $V$. 
 Let 
\begin{equation*}
  \textup{AMNI}(V):=
  \lft\{(\D,\{\omega_1,\dots,\omega_{n-2k-1}\})\;\lft|\; 
  \begin{aligned}
    & \mathcal{D}\ \text{is maximally non-integrable} \\
    & \text{with respect to}\ 2\text{-forms}\ \omega_1,\dots,\omega_{n-2k-1} 
  \end{aligned}
  \rgt.\rgt\}
\end{equation*}
denote the set 
of pairs of an almost maximally non-integrable distribution $\D$
and its defining $2$-forms on $V$. 

  We should remark that if $\D$ is maximally non-integrable 
defined by $(n-2k-1)$-tuple $\{\alpha_1,\dots,\alpha_{n-2k-1}\}$ of $1$-forms, 
then $\{d\alpha_1,\dots,d\alpha_{n-2k-1}\}$ is a tuple of $2$-forms 
that satisfies the condition of the definition above. 
 Then $\D$ is almost maximally non-integrable. 
 If the corank of $\D$ is $n-2k-1=1$, 
an almost maximally non-integrable distribution 
is an almost even-contact structure. 

  By using the notion, almost maximally non-integrable distribution, 
the key theorem is formulated as follows. 

%
%
\begin{thrm}\label{thm:key}
  Let $V$ be a manifold of dimension~$n$, 
and $\D$ a distribution of odd rank~$2k+1$, $k\in\mathbb{N}$, on $V$. 
 Assume that $\D$ is almost maximally non-integrable 
with respect to $(n-2k-1)$-tuple 
$\{\omega_1,\ \omega_2,\dots,\ \omega_{n-2k-1}\}$ of $2$-forms. 
  Then there exists a distribution $\mathcal{E}$ of rank~$2k+1$ on $V$ 
and defining $(n-2k-1)$-tuple $\{\alpha_1,\ \alpha_2,\dots,\ \alpha_{n-2k-1}\}$ 
of $1$-forms for which 
$(\D,\{\omega_i\})$ is homotopic to $(\mathcal{E},\{d\alpha_i\})$ 
in $\textup{AMNI}(V)$. 
 In addition, the one-parametric version also holds. 
 In other words, any path 
$(\mathcal{D}_t,\{\omega^t_i\})\in\textup{AMNI}(V)$, $t\in[0,1]$, of 
almost maximally non-integrable distributions of derived length one 
between two genuine maximally non-integrable distributions 
$\mathcal{D}_0,\ \mathcal{D}_1$ of derived length one 
can be deformed in $\textup{AMNI}(V)$ 
to a path $(\tilde{\mathcal{D}}_t,\{d\alpha^t_i\})$ of genuine 
maximally non-integrable distributions of derived length one 
keeping both $\mathcal{D}_0=\tilde{\mathcal{D}}_0$ 
and $\mathcal{D}_1=\tilde{\mathcal{D}}_1$. 
\end{thrm}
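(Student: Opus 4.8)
The plan is to present Theorem~\ref{thm:key} as an instance of the \emph{h}-principle for differential sections (Theorem~\ref{thm:HP_D-secs}) applied to the exterior derivative, reducing the required \emph{h}-principle for the associated open relation to an ampleness check via Theorem~\ref{thm:HP_apl_dr}. I would keep the distribution $\D$ fixed throughout the convex-integration stage and adjust it only at the very end. Fix a coframe $\beta_1,\dots,\beta_{n-2k-1}$ of $\mathcal S(\D)$ and take the first order operator $\mathcal F=(d,\dots,d)$ sending a section $(\alpha_1,\dots,\alpha_{n-2k-1})$ of $X:=\bigoplus^{\,n-2k-1}T^\ast V$ to $(d\alpha_1,\dots,d\alpha_{n-2k-1})\in\sect Z$, where $Z:=\bigoplus^{\,n-2k-1}\bigwedge^2T^\ast V$. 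Its symbol $F=\symb{\mathcal F}$, fibrewise antisymmetrisation, is epimorphic, so the hypothesis of Theorem~\ref{thm:HP_D-secs} holds. Let $S\subset Z$ be the bad set of tuples $(\eta_1,\dots,\eta_{n-2k-1})$ for which $\eta_1^{\,k},\dots,\eta_{n-2k-1}^{\,k}$ fail to be independent on the fixed fibre $\D$, and put $\Sigma:=F^{-1}(S)$, $\mathcal R:=\ojet X\setminus\Sigma$.

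The almost structure $(\D,\{\omega_i\})$ is then precisely a section of $Z\setminus S$. Granting that $\mathcal R$ obeys the \emph{h}-principle, Theorem~\ref{thm:HP_D-secs} deforms $\{\omega_i\}$ inside $\sect{Z\setminus S}$ to an $\mathcal F$-section $\{d\alpha_i\}\in\dsec{Z\setminus S}$, and by the $C^0$-control intrinsic to convex integration on $\mathcal R$ the primitives $\alpha_i$ may be kept $C^0$-close to $\beta_i$. Hence $\mathcal E:=\ker\{\alpha_i\}$ is $C^0$-close to $\D$ in the Grassmann bundle; since independence of the $(d\alpha_i)^k$ is an \emph{open} condition, independence on $\D$ forces independence on the nearby $\mathcal E$, so $(\mathcal E,\{d\alpha_i\})$ is genuinely maximally non-integrable. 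Concatenating the homotopy above (with $\D$ fixed) with the straight-line homotopy of distributions $\D\rightsquigarrow\mathcal E$, which stays in $\textup{AMNI}(V)$ by the same openness, yields the asserted path; the one-parametric clauses of Theorems~\ref{thm:HP_apl_dr} and~\ref{thm:HP_D-secs} give the relative version between two genuine endpoints verbatim. (Equivalently one folds $\ker\{\alpha_i\}$ directly into the relation and invokes Theorem~\ref{thm:HP_apl_dr}, with identical ampleness.)

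Everything thus rests on the ampleness of $\mathcal R$, which I would verify fibrewise. A principal subspace freezes the $0$-jet, hence fixes the fibre $W:=\D_x$ with $\dim W=2k+1$; choosing the distinguished direction $e\in W$ transverse to the principal hyperplane splits $W=\langle e\rangle\oplus W_0$ with $\dim W_0=2k$, and only the ``$e$-row'' of each $d\alpha_i|_W$ varies, so $\eta_i|_W=e^\ast\wedge\lambda_i+\mu_i$ with $\mu_i\in\bigwedge^2W_0^\ast$ fixed and $\lambda_i\in W_0^\ast$ free. Since $(e^\ast\wedge\lambda_i)^2=0$, a one-line computation gives $\eta_i^{\,k}=c_i\,\mathrm{vol}_{W_0}+k\,e^\ast\wedge(\lambda_i\wedge\mu_i^{\,k-1})$ with $c_i=\mathrm{Pf}(\mu_i)$ fixed; under the identification $\bigwedge^{2k}W^\ast\cong\R\oplus\bigwedge^{2k-1}W_0^\ast\cong\R^{2k+1}$ this is the vector $V_i:=(c_i;\lambda_i\wedge\mu_i^{\,k-1})$, and $\mathcal R$ becomes the linear-independence locus of the $V_i$. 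When the corank satisfies $n-2k-1\le 2k$ this is the complement of a determinantal set of codimension $\ge 2$ in each principal subspace, so $\Sigma$ is thin and $\mathcal R$ is ample.

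The delicate case is the top corank $n=2(2k+1)$, where $\Sigma$ meets a principal subspace in a hypersurface and thinness fails. On the open stratum where every $\mu_i$ is symplectic one has $c_i\neq0$ and, by hard Lefschetz, $\lambda\mapsto\lambda\wedge\mu_i^{\,k-1}$ is an isomorphism $W_0^\ast\xrightarrow{\sim}\bigwedge^{2k-1}W_0^\ast$; after reparametrising and one round of column operations the relation is identified with $\{M\in GL_{2k}(\R)\}$ times a free linear factor. Because $GL_{2k}(\R)$ is invariant under $M\mapsto sM$ for every $s\neq0$ (here $(-1)^{2k}=1$) and under $M\mapsto -M$, each of its two components is a connected symmetric open cone, so its convex hull is all of $\R^{(2k)^2}$ and $\mathcal R$ is ample. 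The remaining jets, where some $\mu_i$ degenerate, are stratified by the ranks of the $\mu_i$; on each stratum the same model applies and the bad locus is either thin or the whole principal subspace, the latter giving the vacuously ample empty intersection. Verifying this ampleness uniformly across the degeneracy strata is the main obstacle; once it is in hand, Theorem~\ref{thm:HP_apl_dr} supplies the \emph{h}-principle for $\mathcal R$ and the argument above closes.
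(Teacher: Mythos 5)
Your proposal has the same two-stage architecture as the paper (ampleness of an open relation via Theorem~\ref{thm:HP_apl_dr}, then Theorem~\ref{thm:HP_D-secs} to pass to exact forms), but it defers its entire technical weight to the ampleness of $\mathcal{R}$ and then leaves that verification open: you yourself call the uniform treatment of the degeneracy strata ``the main obstacle.'' That obstacle \emph{is} the theorem --- the paper's Proposition~4.1 and its several-page coordinate computation exist precisely to settle it. Worse, the partial claims you make about it are false. For corank $n-2k-1\le 2k$ you assert the bad locus has codimension $\ge 2$ in every principal subspace; but take $k\ge 2$, $n=2k+3$, and a principal subspace whose frozen data satisfy $\mu_1=\mu_2=\mu$ of rank $2k-2$ on $W_0$, with $2$-dimensional kernel $K$. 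Then $c_1=c_2=0$ and the image of each map $\lambda\mapsto k\,\lambda\wedge\mu^{k-1}$ is one and the same $2$-plane (isomorphic to $K$ under $\bigwedge^{2k-1}W_0^\ast\cong W_0$), so $V_1,V_2$ range over a fixed $\R^2$ and their dependence is a single determinantal equation: the bad locus is a \emph{hypersurface}, neither of codimension $\ge2$ nor all of $P$. So your dichotomy ``thin or the whole principal subspace'' fails exactly on the strata you postpone. (The complement there is a pullback of $GL_2(\R)$ and is in fact ample by your cone argument, but proving such statements uniformly over all rank configurations of the $\mu_i$ is precisely the missing content.)

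Independently, the step recovering the distribution from the primitives is unsound as written. Theorems~\ref{thm:HP_apl_dr} and~\ref{thm:HP_D-secs}, as stated, carry no $C^0$-control; and even granting the standard $C^0$-dense form of convex integration, your openness argument is circular: how close $\mathcal{E}=\ker\{\alpha_i\}$ must be to $\D$ for independence of $(d\alpha_i)^k$ to survive the change of subspace depends on the forms $d\alpha_i$ themselves, and those are produced only by the very convex-integration step you are trying to control --- nothing bounds their size or the robustness of their independence in terms of the initial data, so no $\epsilon$ fixed in advance is ``close enough.'' The paper sidesteps both problems at once by coupling the $0$-jet into the relation: its operator is $\mathcal{F}_i(\alpha_1,\dots,\alpha_{n-2k-1})=(\alpha_1,\dots,\alpha_{n-2k-1},d\alpha_i)$ and its singularity is the locus where the $(n-1)$-forms $\alpha_1\wedge\dots\wedge\alpha_{n-2k-1}\wedge(\omega_i)^k$ become dependent, so a genuine solution automatically has $(d\alpha_i)^k$ independent on $\ker\{\alpha_j\}$, with no closeness argument at all. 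Your closing parenthesis (``fold $\ker\{\alpha_i\}$ into the relation'') is exactly this fix, and your observation that on a principal subspace the frozen $0$-jet reduces its ampleness to the fixed-subspace computation is correct --- but that again lands you on the ampleness you have not established. The paper settles it by writing $\Sigma\cap P$ as a family of linear systems in the principal variables and showing, via the symmetry $C^i_r(m)=\pm C^i_m(r)$ of Equation~\eqref{eq:pseudosymm}, that the coefficient matrix never has rank exactly one.
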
 
\noindent
 This theorem is proved in the next section 
by using the \textit{h}-principles introduced in Section~\ref{sec:h-prin}. 
 Theorem~\ref{thma} and Theorem~\ref{thmb} follows this theorem 
(see Section~\ref{sec:proof}). 

\subsection{Differential relations for the problems}\label{sec:def-diffrel}
  In order to show Theorem~\ref{thm:key}, 
we apply Gromov's convex integration method, Theorem~\ref{thm:HP_apl_dr}, 
and the \textit{h}-principle for differential sections, 
Theorem~\ref{thm:HP_D-secs}. 
  Then, in this subsection, 
we define the first order linear differential operator 
and the differential relations 
concerning the problems dealt in Theorem~\ref{thm:key} 
in order to apply the theorems above. 

  First, we define a first order linear differential operator 
so that the target is concerning Theorem~\ref{thm:key}, 
in order to apply Theorem~\ref{thm:HP_D-secs}. 
 Let $V$ be a manifold of dimension~$n$. 
 Let $X\to V$ and $Z\to V$ be vector bundles over $V$ defined as 
\begin{equation}\label{eq:XZbdls}
  \begin{aligned}
    X&:=\bigoplus^{n-2k-1} T^\ast V\to V, \\
    Z&:=\lft(\bigoplus^{n-2k-1} T^\ast V\rgt)\oplus\lft(\bigwedge^2 T^\ast V\rgt)
    \to V, 
  \end{aligned}
\end{equation}
where $k\in\mathbb{N}$ is the same integer as in the statement 
of Theorem~\ref{thm:key}. 
 Then we define the first order linear differential operator 
\begin{equation*}
  \mathcal F\colon\sect X\to\sect{\bigoplus^{n-2k-1} Z}
\end{equation*}
as follows. 
 Let $\mathcal{F}_i\colon\sect X\to\sect Z$, $i=1,2,\dots,n-2k-1$, 
be a first order linear differential operator defined as 
$\mathcal{F}_i\colon (\alpha_1,\alpha_2,\dots,\alpha_{n-2k-1})\mapsto 
(\alpha_1,\alpha_2,\dots,\alpha_{n-2k-1}, d\alpha_i)$. 
 Then we define the operator $\mathcal{F}$ as
\begin{align}\label{eq:def_f}
  \mathcal{F}:=(\mathcal{F}_1,\dots,\mathcal{F}_{n-2k-1})\colon
  &(\alpha_1,\dots,\alpha_{n-2k-1})\mapsto \notag \\
  &\bigl((\alpha_1,\dots,\alpha_{n-k-1},d\alpha_1),\dots,
  (\alpha_1,\dots,\alpha_{n-k-1},d\alpha_{n-k-1})\bigr). 
\end{align}
 We should remark that the symbol $F\colon X^{(1)}\to\bigoplus^{n-2k-1}Z$ 
of $\mathcal{F}$ is fiberwise epimorphic. 

  Next, we define a differential relation in the source side of $\mathcal{F}$ 
that corresponds to a critical condition for Theorem~\ref{thm:key} 
in the target side. 
 Let $S\subset\bigoplus^{n-2k-1} Z$ be the subset defined as
 \begin{align*}
   S:=&\lft\{\lft((\alpha^1_1,\dots,\alpha^1_{n-2k-2},\omega_1)_v,\dots,
        (\alpha^{n-2k-1}_1,\dots,\alpha^{n-2k-1}_{n-2k-2},\omega_{n-2k-1})_v\rgt)
        \rgt. \\
      &\hspace{5cm}\in\bigoplus^{n-2k-1}\lft(\bigoplus^{n-2k-1}T^\ast V
        \oplus\bigwedge^2T^\ast V\rgt)=\bigoplus^{n-2k-1}Z \\
      &\quad\big|\; 
     (\alpha^1_1\wedge\dots\wedge\alpha^1_{n-2k-1}\wedge(\omega_1)^k)_v,\dots, 
        (\alpha^{n-2k-1}_1\wedge\dots\wedge\alpha^{n-2k-1}_{n-2k-1}
        \wedge(\omega_1)^k)_v \\
      &\hspace{10cm} \text{are linearly dependent}\bigr\}. 
 \end{align*}
 In other words, in the fiber $\lft(\bigoplus^{n-2k-1} Z\rgt)_v$, 
$S$ is defined by the condition that 
$(\alpha^1_1\wedge\dots\wedge\alpha^1_{n-2k-1}\wedge(\omega_1)^k)_v,\dots, 
        (\alpha^1_1\wedge\dots\wedge\alpha^1_{n-2k-1}\wedge(\omega_1)^k)_v$ 
are linearly dependent. 
 Then we take the inverse image of $S\subset\bigoplus^{n-2k-1}Z$ 
by the symbol $F\colon X^{(1)}\to\bigoplus^{n-2k-1}Z$ 
of $\mathcal{F}\colon \sect X\to \sect{\bigoplus^{n-2k-1}Z}$ 
(see Section~\ref{sec:h-prin} for definition). 
 Set $\Sigma:=F^{-1}(S)\subset X^{(1)}$. 
 We remark that, from the construction of $\mathcal{F}$, 
the subset $\Sigma\subset X^{(1)}$ is the same 
as the inverse image $F^{-1}(\tilde{S})\subset X^{(1)}$ of the set 
\begin{align}\label{eq:def_s}
     \tilde{S}:=&\lft\{\lft((\alpha_1,\dots,\alpha_{n-2k-2},\omega_1)_v,\dots,
        (\alpha_1,\dots,\alpha_{n-2k-2},\omega_{n-2k-1})_v\rgt)
        \rgt. \notag \\
      &\hspace{5cm}\in\bigoplus^{n-2k-1}\lft(\bigoplus^{n-2k-1}T^\ast V
        \oplus\bigwedge^2T^\ast V\rgt)=\bigoplus^{n-2k-1}Z \notag \\
      &\quad\big|\; 
     (\alpha_1\wedge\dots\wedge\alpha_{n-2k-1}\wedge(\omega_1)^k)_v,\dots, 
        (\alpha_1\wedge\dots\wedge\alpha_{n-2k-1}\wedge(\omega_{n-2k-1})^k)_v 
        \notag \\
      &\hspace{8cm} \text{are linearly dependent}\bigr\}. 
\end{align}
 Then as the subset $\tilde{S}\subset\bigoplus^{n-2k-1}$ corresponds to where 
the maximally non-integrability collapses, 
the subset $\Sigma=F^{-1}(\tilde{S})\subset X^{(1)}$ is regarded 
as the singularity to be considered. 
 Set 
\begin{equation}\label{eq:def_dr}
\mathcal{R}:=X^{(1)}\setminus\Sigma\subset X^{(1)}. 
\end{equation}
 It is the open differential relation to be considered in the next section 
to show Theorem~\ref{thm:key}.

\section{Proof of Theorem~\ref{thm:key}}\label{sec:pf_ethm}
  In this section, we show Theorem~\ref{thm:key}. 
 As mentioned above, we apply Gromov's convex integration method, 
and the \textit{h}-principle for differential sections. 
 Then the key of the proof is the following proposition. 
 The notations introduced in the previous section are used. 
 Let $V$ be an $n$-dimensional manifold, 
$X=\bigoplus^{n-2k-1}T^\ast V$ 
and $Z=\lft(\bigoplus^{n-2k-1}T^\ast V\rgt)\oplus
\lft(\bigoplus^k\lft(\bigwedge^2 T^\ast V\rgt)\rgt)$ 
the vector bundles over $V$,
$\mathcal{F}\colon\sect X\to\sect{\bigoplus^{n-2k-1}Z}$ 
the linear differential operator, 
and $\mathcal{R}=X^{(1)}\setminus\Sigma\subset X^{(1)}$ 
the differential relation. 
%
%
\begin{prop}\label{prop:core}
  The open differential relation $\mathcal{R}\subset X^{(1)}$ is ample. 
\end{prop}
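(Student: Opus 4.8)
The plan is to verify ampleness principal-subspace by principal-subspace, reducing the whole statement to a pointwise linear-algebra computation and then to a codimension count for the singularity $\Sigma$. Fix a point $v\in V$ and a principal subspace $P=P^l_\tau\subset X^{(1)}$, and choose linear coordinates $(x_1,\dots,x_n)$ on $T_vV$ so that the hyperplane $\tau$ is spanned by $\partial/\partial x_1,\dots,\partial/\partial x_{n-1}$ and the principal (transverse) direction is $\partial/\partial x_n$. Writing $\alpha_i=\sum_j a_{ij}\,dx_j$, the data fixed on $P$ are the values $a_{ij}$ together with the tangential derivatives $\partial_p a_{ij}$ ($p<n$), while the transverse derivatives $\partial_n a_{ij}$ are the free fibre coordinates of the affine space $P$.

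The first key step is to compute how these free parameters enter the defining forms $\Theta_i:=\alpha_1\wedge\dots\wedge\alpha_{n-2k-1}\wedge(d\alpha_i)^k$. Collecting the terms of $d\alpha_i$ that contain $dx_n$, I write $d\alpha_i=\gamma_i+\eta_i\wedge dx_n$, where $\gamma_i\in\bigwedge^2\langle dx_1,\dots,dx_{n-1}\rangle$ is fixed on $P$ and $\eta_i=\sum_{j<n}(\partial_j a_{in}-\partial_n a_{ij})\,dx_j$ ranges over the entire space $\langle dx_1,\dots,dx_{n-1}\rangle$ as the transverse derivatives vary (the remaining coordinates $\partial_n a_{in}$ do not appear at all). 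Since $(\eta_i\wedge dx_n)^2=0$, the power collapses to
\[(d\alpha_i)^k=\gamma_i^k+k\,\gamma_i^{k-1}\wedge\eta_i\wedge dx_n,\]
so that, with $A:=\alpha_1\wedge\dots\wedge\alpha_{n-2k-1}$ fixed,
\[\Theta_i=A\wedge\gamma_i^k+k\,A\wedge\gamma_i^{k-1}\wedge\eta_i\wedge dx_n\]
is an \emph{affine} function of $\eta_i$ and, crucially, depends on that single free block $\eta_i$ only. Hence on $P$ the $n-2k-1$ forms $\Theta_1,\dots,\Theta_{n-2k-1}\in\bigwedge^{n-1}T_v^\ast V$ vary affinely and mutually independently, and $\Sigma\cap P$ is precisely the locus where they become linearly dependent, multiplied by the free factor spanned by the inert coordinates $\partial_n a_{in}$.

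With this description, I reduce ampleness to the thin-singularity criterion (the corollary to Theorem~\ref{thm:HP_apl_dr}): it suffices to show that $\Sigma\cap P$ has codimension $\ge 2$ in $P$, for then $\mathcal{R}\cap P=P\setminus\Sigma$ is path-connected and, being the complement of a closed real-algebraic set of codimension $\ge 2$, has convex hull all of $P$. The target count is favourable: $m:=n-2k-1$ vectors in the $n$-dimensional space $\bigwedge^{n-1}T_v^\ast V$ are generically independent, and the locus of matrices of rank $\le m-1$ is a determinantal variety of codimension $n-m+1=2k+2\ge 2$ (recall $k\ge 1$, since the rank $2k+1\ge 3$). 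As the inert coordinates merely cylinder up $\Sigma\cap P$, its codimension in $P$ equals the codimension of the dependence locus in the parameter space $(\eta_1,\dots,\eta_m)$.

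The main obstacle is to confirm that this generic codimension survives the affine parametrization $(\eta_i)\mapsto(\Theta_i(\eta_i))$, that is, that the determinantal locus is not over-hit. Because the parametrization is block-diagonal and affine, I would argue stratum by stratum: stratify the dependence variety by corank and bound the dimension of the preimage of each stratum, the essential input being the rank of each linear part $\ell_i(\eta)=k\,A\wedge\gamma_i^{k-1}\wedge\eta\wedge dx_n$ and the mutual position of the affine images $\Theta_i(\langle dx_1,\dots,dx_{n-1}\rangle)$. A careful case analysis here — showing that even where some $\ell_i$ degenerate the combined dependence locus cannot reach codimension $1$ — is the crux; the large generic codimension $2k+2$ provides precisely the slack that makes such a bound attainable.
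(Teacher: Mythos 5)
Your setup is sound and is essentially the paper's own, in cleaner clothing: you restrict to a principal subspace $P$, observe that each form $\Theta_i=\alpha_1\wedge\dots\wedge\alpha_{n-2k-1}\wedge(d\alpha_i)^k$ restricted to $P$ is an affine function of the single block $\eta_i$ of transverse derivatives (this is exactly the content of the paper's coordinate computation~\eqref{eq:loc_coeff}), and you reduce ampleness to a codimension bound for the dependence locus. But the argument stops precisely where the proposition begins. The appeal to the generic codimension $2k+2$ of the determinantal variety carries no logical weight: the tuple $\bigl(\Theta_1(\eta_1),\dots,\Theta_m(\eta_m)\bigr)$, $m=n-2k-1$, sweeps out only an affine subspace of the space of $n\times m$ matrices, and an affine slice of a determinantal variety can meet it in any codimension between $0$ and the generic one. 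Indeed the worst case genuinely occurs: at a point $x\in X$ where the covectors $\alpha_1,\dots,\alpha_{n-2k-1}$ are linearly dependent, one has $A=0$, so every $\Theta_i$ vanishes identically on $P$ and $\Sigma\cap P=P$. There your claimed bound ``$\operatorname{codim}(\Sigma\cap P)\ge 2$'' is simply false; ampleness survives only because $\mathcal{R}\cap P=\emptyset$ and the empty set is ample by convention. Your reduction never separates out this case, whereas the paper does (it is the ``rank $0$'' alternative in its proof).

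The ``careful case analysis'' you defer is therefore not a technical afterthought but the entire content of the proposition, and it cannot be won by dimension-counting slack alone; it requires a structural fact about the specific affine maps $\ell_i$. The paper's input is concrete: fix a putative kernel vector $c=(c_1,\dots,c_m)$, write the dependence $\sum_i c_i\Theta_i=0$ as an inhomogeneous linear system in the transverse derivatives, and exploit two special features of the coefficients $C^i_r(m)$ --- the vanishing $C^i_r(r)=0$ of the ``diagonal'' entries and the pseudo-symmetry $C^i_r(m)=\pm C^i_m(r)$ of Equation~\eqref{eq:pseudosymm} --- to show that the coefficient matrix of this system can never have rank exactly $1$: a nonzero column has zero diagonal entry, hence a nonzero off-diagonal entry, and pseudo-symmetry then produces a second column that is manifestly independent of the first. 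Thus for every admissible $c$ the solution set has codimension $0$ (the empty/ample case above) or $\ge 2$, which is what thinness demands. Nothing in your stratification plan identifies or uses a property of this kind, so as written the proposal is an accurate reformulation of the problem rather than a proof of it.
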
 

\begin{proof}
  In order to show that the differential relation 
$\mathcal{R}=\ojet{X}\setminus\Sigma\subset\ojet{X}$ is ample, 
we will show that the singularity $\Sigma\subset\ojet{X}$ is thin. 

  It is enough that we discuss by using local coordinates 
of the base manifold $V$ of the bundles. 
 Let $(x_1,x_2,\dots,x_n)$ be local coordinates of $V$. 
 Then $\{(dx_1)_v,\dots,(dx_n)_v\}$ is a basis of the fiber $T^\ast_vV$. 
 Let $(a_1,\dots,a_n)$ be coordinates on the fiber of $T^\ast V$. 
 Similarly, $\{(dx_i\wedge dx_j)_v\}_{1<i<j<n}$ is a basis of the fiber 
$\lft(\bigwedge^2 T^\ast V\rgt)_v$. 
 Let $(z_{12},\dots,z_{(n-1)n})$ be coordinates on the fiber 
of $\bigwedge^2 T^\ast V$. 
 Further, let $(a_1,\dots,a_n,y_{11},y_{12},\dots,y_{nn})$ be coordinates of 
the fiber $(T^\ast V)^{(1)}_v$, 
where $y_{ij}$ corresponds to $\rd a_i/\rd x_j$. 

  By using such local coordinates, we write down the singularity 
$\Sigma\subset \ojet{\lft(\bigoplus^{n-2k-1}T^\ast V\rgt)}=\ojet{X}$ as follows. 
 Recall that $\Sigma$ is defined as the inverse image 
$\Sigma=F^{-1}(\tilde{S})$ 
by the symbol $F\colon\ojet{X}\to\sect{\bigoplus^{n-2k-1}Z}$ 
of the linear differential operator 
$\mathcal{F}\colon\sect X\to\sect{\bigoplus^{n-2k-1}Z}$ 
defined by Equation~\eqref{eq:def_f} concerning exterior derivative 
of differential forms. 
 Note that, by the exterior derivative, 
the coordinates $y_{ij}-y_{ji}$ correspond to $z_{ij}$. 
 Recall that the set $\tilde{S}\subset\sect{\bigoplus^{n-2k-1}Z}$ 
is defined by the following condition (see Equation~\eqref{eq:def_s}): 
$(n-2k-1)$ pieces of $(n-1)$-forms 
\begin{equation}\label{eq:dep_forms}
  \alpha_1\wedge\dots\wedge\alpha_{n-2k-1}\wedge(\omega_1)^k,\dots, 
  \alpha_1\wedge\dots\wedge\alpha_{n-2k-1}\wedge(\omega_{n-2k-1})^k
\end{equation}
are linearly independent on each fiber, 
where $\alpha_i\in\sect{T^\ast V}$ are $1$-forms 
and $\omega_i\in\sect{\bigwedge^2 T^\ast V}$ are $2$-forms on $V$. 
 Then, in order to wright down the singularity 
$\Sigma=F^{-1}(\tilde{S})\subset \ojet{X}$ by using local coordinates, 
we represent $\alpha_i$ and $\omega_i$ on a fiber over $v\in V$ 
by such coordinates as follows: 
\begin{align*}
  \alpha_i&=\sum_{j=1}^n a^i_jdx_j 
            =a^i_1dx_1+a^i_2dx_2+\dots+a^i_ndx_n, \quad (i=1,2,\dots,n-2k-1), \\
  \omega_i&=\sum_{1\le j\le k\le n}z^i_{jk}dx_j\wedge dx_k \\
          & =z^i_{12}dx_1\wedge dx_2+\dots+z^i_{(n-1)n}dx_{n-1}\wedge dx_n, 
            \qquad (i=1,2,\dots,n-2k-1). 
\end{align*}
 Following this representation, the $(n-1)$-forms 
in Equation~\eqref{eq:dep_forms} are written down as follows. 
 First, we have 
\begin{equation*}
  (\omega_i)^k=\sum_{1\le j_1<\dots<j_{2k}\le n}(A^i_{j_1\dots j_{2k}})
  dx_{j_1}\wedge\dots\wedge dx_{j_{2k}}, \quad i=1,2,\dots,n-2k-1, 
\end{equation*}
where the coefficients are 
\begin{align}\label{eq:def_Aij}
  A^i_{j_1\dots j_{2k}}=&\sum_{\{l_1,\dots,l_{2k}\}=\{j_1,\dots,j_{2k}\}}
  \sigma(l_1,\dots,l_{2k})\cdot z^i_{l_1l_2}\cdots z^i_{l_{2k-1}l_{2k}}, \\
  &\qquad i=1,2,\dots,n-2k-1,\quad 1\le j_1<j_2<\dots<j_{2k}\le n, \notag
\end{align}
for the sign 
$\sigma(l_1,\dots,l_{2k})=\operatorname{sgn}
\begin{pmatrix}
  j_1&\cdots&j_{2k}\\l_1&\cdots&l_{2k}
\end{pmatrix}$ 
of permutations. 
 Note that $z^i_{jk}$ is valid for $j<k$. 
 Then the $(n-1)$-forms in Equation~\eqref{eq:dep_forms} are 
\begin{align}\label{eq:loc_dep_forms}
  \alpha_1\wedge\alpha_2\wedge\dots\wedge\alpha_{n-2k-1}\wedge(\omega_i)^k
  =&\sum_{r=1}^n(B^i_r) dx_1\wedge\dots\wedge\widehat{dx_r}\wedge
     \dots\wedge dx_n, \\
   &\qquad i=1,2,\dots,n-2k-1, \notag
\end{align}
where ``$\widehat{dx_r}$'' implies ``without $dx_r$.'' 
 The coefficients are 
\begin{align}\label{eq:def_Bij}
  B^i_r=&\sum_{\{p_1,\dots,p_{n-1}\}=\{1,\dots,\widehat{r},\dots,n\}}
          \sigma(p_1,\dots,p_{n-1})\cdot a^1_{p_1}\cdots a^{n-2k-1}_{p_{n-2k-1}}
          \cdot\lft(A^i_{p_{n-2k}\dots p_{n-1}}\rgt), \\
        &\qquad i=1,2,\dots,n-2k-1,\quad r=1,2,\dots,n. \notag
\end{align}
 Note that $A^i_{p_{n-2k}\dots p_{n-1}}$ is valid 
for $p_{n-2k}<p_{n-2k+1}<\dots<p_{n-1}$. 
 At last, we obtain a representation of the singularity $\Sigma\subset\ojet{X}$ 
by the local coordinates. 
 From the expression in Equation~\eqref{eq:loc_dep_forms}, 
the defining condition of $\Sigma\subset\ojet{X}$ is 
that the following $n-2k-1$ vectors 
\begin{equation*}
  \beta_i:=(B^i_1,B^i_2,\dots,B^i_n),\qquad i=1,2,\dots,n-2k-1, 
\end{equation*}
are linearly independent. 
 In other words, there exists $(c_1,\dots,c_{n-2k-1})$ vanishing nowhere 
that satisfies
\begin{equation}\label{eq:l-indep_condi}
  c_1\beta_1+c_2\beta_2+\dots+c_{n-2k-1}\beta_{n-2k-1}=0. 
\end{equation}

  In order to show that the singularity $\Sigma\subset\ojet{X}$ is thin, 
we observe the intersections of $\Sigma$ with principal subspaces, 
using local coordinates. 
 To make the discussion simple, we take a principal direction of $P$ 
that concerns $x_1$ in the local coordinates of the base manifold $V$. 
 In other words, in the defining condition of $\Sigma\subset\ojet{X}$, 
only $z^i_{1l}$ are variables. 
 Other $z^i_{ml}$ and $a^i_j$ are constant on the intersection 
in a fiber over $v\in V$. 
 This implies, from Equations~\eqref{eq:def_Aij} and~\eqref{eq:def_Bij}, 
that $B^i_1$, $i=1,2,\dots,n-2k-1$, are constant, and 
\begin{align}\label{eq:loc_coeff}
  B^i_r=&\overline{C^i_r}+\sum_{\{p_1,\dots,p_{n-2k}=1,\dots,p_{n-1}\}=\{1,\dots,\widehat{r},\dots,n\}}
  \sigma(p_1,\dots,p_{n-1})
  \cdot\lft(A^i_{1(p_{n-2k+1})\dots (p_{n-1})}\rgt), \notag \\
  =&\overline{C^i_r}+\sum_{m=2}^n (C^i_r(m))\cdot z^i_{1m}, \notag \\
  =&\overline{C^i_r}+(C^i_r(2))\cdot z^i_{12}+(C^i_r(3))\cdot z^i_{13}+\dots+
     (C^i_r(r))\cdot z^i_{1r}+\dots+(C^i_r(n))\cdot z^i_{1n}, \\
  &\qquad i=1,2,\dots,n-2k-1,\quad r=2,3,\dots,n, \notag
\end{align}
where $\overline{C^i_r}$ and 
\begin{align*}
  C^i_r(r)&=0,  \\
  C^i_r(m)&=k\sum_{\{p_1,\dots,p_{n-3}\}=\{1,\dots,n\}\setminus\{1,r,m\}}
            a^i_{p_1}\cdots a^i_{p_{n-2k-1}}A^i_{p_{n-2k}\cdots p_{n-3}}
\end{align*}
are constant. 
 We should remark here that 
\begin{equation}\label{eq:pseudosymm}
  C^i_r(m)=\pm C^i_m(r). 
\end{equation}

  Now, we observe the intersection of the singularity $\Sigma\subset\ojet{X}$ 
with the principal subspace $P$ with respect to $x_1$ direction, 
by using local coordinates. 
 First, we confirm the system of linear equations 
that represents $\Sigma\cap P$. 
 Recall that $\Sigma\subset\ojet{X}$ is defined 
by the condition~\eqref{eq:l-indep_condi}. 
 In other words, 
\begin{equation*}
  \lft\{
  \begin{matrix}
    c_1B^1_1+c_2B^2_1+\dots+c_{n-2k-1}B^{n-2k-1}_1=0, \\
    c_1B^1_2+c_2B^2_2+\dots+c_{n-2k-1}B^{n-2k-1}_2=0, \\
    \vdots \\
    c_1B^1_n+c_2B^2_n+\dots+c_{n-2k-1}B^{n-2k-1}_n=0. \\
  \end{matrix}
  \rgt.
\end{equation*} 
 Note that, since $B^i_1$, $i=1,2,\dots,n-2k-1$, are constant, 
$c_1,c_2,\dots,c_{n-2k-1}$ should satisfy the first equation. 
 If there are no such $c_1,c_2,\dots,c_{n-2k-1}$, 
there is no intersection between $\Sigma$ and $P$ on the fiber $\ojet{X}_v$ 
over $v\in V$. 
 Then, from the second to $n$-th equations, 
and by the Equation~\eqref{eq:loc_coeff}, 
the intersection of $\Sigma$ with $P$ is defined 
by the following system of linear equations 
with respect to $(n-1)\times (n-2k-1)$ variables 
$z^1_{12},\dots,z^1_{1n},z^2_{12},\dots,z^{n-2k-1}_{1n}$:
\begin{equation*}
  \lft\{
  \begin{aligned}
    c_1C^1_2(3)z^1_{13}+c_1C^1_2(4)z^1_{14}+\dots+c_1C^1_2(n)z^1_{1n}& \\
    +c_2C^2_2(3)z^2_{13}+\dots+c_{n-2k-1}C^{n-2k-1}_2(n)z^{n-2k-1}_{1n}&
    =-c_1\overline{C^1_2}-c_2\overline{C^2_2}-\dots-c_n\overline{C^n_2}\\
    c_1C^1_3(2)z^1_{12}+c_1C^1_3(4)z^1_{14}+\dots+c_1C^1_3(n)z^1_{1n}& \\
    +c_2C^2_2(2)z^2_{12}+\dots+c_{n-2k-1}C^{n-2k-1}_2(n)z^{n-2k-1}_{1n}&
    =-c_1\overline{C^1_2}-c_2\overline{C^2_2}-\dots-c_n\overline{C^n_2}\\
    &\vdots \\
    c_1C^1_n(2)z^1_{12}+c_1C^1_n(3)z^1_{13}+\dots+c_1C^1_n(n-1)z^1_{1(n-1)}& \\
    +c_2C^2_n(2)z^2_{12}+\dots+c_{n-2k-1}C^{n-2k-1}_n(n-1)z^{n-2k-1}_{1(n-1)}&
    =-c_1\overline{C^1_n}-c_2\overline{C^2_n}-\dots-c_n\overline{C^n_n}. 
  \end{aligned}
  \rgt.
\end{equation*}

  Then we observe the codimension of $\Sigma\cap P\subset P$ 
to show $\Sigma\subset\ojet{X}$ is thin. 
 The rank of the coefficient matrix of the system of equations above 
is the codimension of $\Sigma\cap P\subset P$. 
 The coefficient matrix is the $(n-1)\times(n-1)(n-2k-1)$-matrix written down as
\begin{equation*}
  \begin{pmatrix}
    0&\tilde{C}^1_2(3)&\tilde{C}^1_2(4)&\dots&\tilde{C}^1_2(n)&0&\tilde{C}^2_2(3)&\dots&\tilde{C}^{n-2k-1}_2(n)\\
    \tilde{C}^1_3(2)&0&\tilde{C}^1_3(4)&\dots&\tilde{C}^1_3(n)&\tilde{C}^2_3(2)&0&\dots&\tilde{C}^{n-2k-1}_3(n)\\
    \vdots &\vdots&\vdots&&\vdots&\vdots&\vdots&&\vdots \\
    \tilde{C}^1_n(2)&\tilde{C}^1_n(3)&\tilde{C}^1_3(4)&\dots&0&\tilde{C}^2_n(2)&\tilde{C}^2_n(3)&\dots&0\\
  \end{pmatrix}, 
\end{equation*}
where $\tilde{C}^i_r(m)=c_iC^i_r(m)$. 
 When the rank of this matrix is $0$, the differential relation 
$\mathcal{R}=\ojet{X}\setminus\Sigma$ is ample by definition. 
 We claim that the rank of this matrix never be $1$. 
 Suppose the $i$-th column $\boldsymbol{a}_i\in\mathbb{R}^{n-1}$ is non-zero. 
Assume $i\equiv\bar{i} \pmod{n-1}$ for $1\le\bar{i}\le n-1$, 
and $i=\bar{i}+j(n-1)$. 
 Then the $\bar{i}$-th element of $\boldsymbol{a}_i$ is 
$\tilde{C}^{j+1}_{\bar{i}+1}(\bar{i}+1)=c_{j+1}C^{j+1}_{\bar{i}+1}(\bar{i}+1)=0$. 
 As $\boldsymbol{a}_i\in\mathbb{R}^{n-1}$ is not zero, 
there should be a non-zero element, 
say $\tilde{C}^{j+1}_{r}(\bar{i}+1)\ne 0$ for $r\ne \bar{i}+1$. 
 From Equation~\eqref{eq:pseudosymm}, 
we have $\tilde{C}^{j+1}_{\bar{i}+1}(r)=\tilde{C}^{j+1}_{r}(\bar{i}+1)\ne 0$. 
 This implies that the $(r-1+j(n-1))$-st column 
$\boldsymbol{a}_{r-1+j(n-1)}\in\mathbb{R}^{n-1}$ of the coefficient matrix 
is not zero, 
and the $\bar{i}$-th element of $\boldsymbol{a}_{r-1+j(n-1)}$ 
is $\tilde{C}^{j+1}_{\bar{i}+1}(r)\ne 0$. 
 Therefore, $\boldsymbol{a}_i, \boldsymbol{a}_{r-1+j(n-1)}\in\mathbb{R}^{n-1}$ 
are linearly independent. 
 Thus we conclude that the rank of the coefficient matrix is greater than $1$. 

  Thus, we have proved that the singularity $\Sigma$ is thin 
and the differential relation $\mathcal{R}=\ojet{X}\setminus\Sigma$ is ample. 
\end{proof}

  In the rest of this section, we prove Theorem~\ref{thm:key}. 

  \begin{proof}[Proof of Theorem~\textup{\ref{thm:key}}]
 Let $V$ be a manifold of dimension~$n$, 
and $\mathcal{D}$ a distribution of rank~$2k+1$ on $V$. 
 Suppose that the $1$-forms $\alpha_i$ and $2$-forms $\omega_i$ satisfy 
the assumptions in Theorem~\ref{thm:key} for $\mathcal{D}$. 
 We prove in the following two steps. 

  First, we apply Proposition~\ref{prop:core} and 
Gromov's \textit{h}-principle for ample differential relations 
(Theorem~\ref{thm:HP_apl_dr}). 
 Let the vector bundle $X:=\bigoplus^{n-2k-1}T^\ast V$ over $V$ 
be as in Equations~\eqref{eq:XZbdls}. 
 Now, we take the differential relation 
$\mathcal{R}\subset\ojet{X}$ to be considered as in Equation~\eqref{eq:def_dr}. 
 Then, from Proposition~\ref{prop:core}, $\mathcal{R}\subset\ojet{X}$ is ample. 
 This implies, according to Theorem~\ref{thm:HP_apl_dr}, 
that the differential relation $\mathcal{R}\subset\ojet{X}$ satisfies 
the \textit{h}-principle and the one-parametric \textit{h}-principle. 

  Next, we discuss the \textit{h}-principles for differential operators. 
 For the vector bundle $X$ above, let the vector bundle 
$Z:=\lft(\bigoplus^{n-2k-1}T^\ast V\rgt)\oplus\lft(\bigwedge^2T^\ast V\rgt)$ 
over $V$ be as in Equations~\eqref{eq:XZbdls}, 
and the linear differential operator 
$\mathcal{F}\colon\sect{X}\to\sect{\bigoplus^{n-2k-1}Z}$ 
be as in Equation~\eqref{eq:def_f}. 
 Note that the symbol $F\colon X^{(1)}\to\bigoplus^{n-2k-1}Z$ of $\mathcal{F}$ 
is fiberwise epimorphic. 
 Recall that the differential relation $\mathcal{R}\subset\ojet{X}$ is defined 
as $\mathcal{R}=\ojet{X}\setminus\Sigma=\ojet{X}\setminus F^{-1}(\tilde{S})$ 
for the set $\tilde{S}\subset\bigoplus^{n-2k-1}Z$ 
defined in Equation~\eqref{eq:def_s}. 
 Now, we have proved that the differential relation $\mathcal{R}\subset\ojet{X}$
satisfies the \textit{h}-principle and the one-parametric \textit{h}-principle. 
 Then, from Theorem~\ref{thm:HP_D-secs}, 
we obtain that the \textit{h}-principle 
and the one-parametric \textit{h}-principle hold for the inclusion 
\begin{equation*}
  \secc_{\mathcal{F}}\lft(\lft(\bigoplus^{n-2k-1}Z\rgt)\setminus\tilde{S}\rgt)
  \hookrightarrow 
  \sect{\lft(\bigoplus^{n-2k-1}Z\rgt)\setminus\tilde{S}}, 
\end{equation*}
where
\begin{align*}
  &\secc_{\mathcal{F}}\lft(\lft(\bigoplus^{n-2k-1}Z\rgt)\setminus\tilde{S}\rgt)\\
  :=&
  \lft\{\lft.s\colon V\to\lft(\bigoplus^{n-2k-1}Z\rgt)\setminus\tilde{S},\ 
  \begin{aligned}
    &\text{section of} \\
    &\lft(\bigoplus^{n-2k-1}Z\rgt)\setminus\tilde{S}
  \end{aligned}
  \;\rgt|\; 
  \begin{aligned}
    &s=\mathcal{F}(t)\\ 
    &\text{for some section}\ t\ \text{of}\ X
  \end{aligned}
  \rgt\}
\end{align*}
is the space of $\mathcal{F}$-sections. 
 Since 
$\secc_{\mathcal{F}}\lft(\lft(\bigoplus^{n-2k-1}Z\rgt)\setminus\tilde{S}\rgt)$
corresponds to the set of genuine maximally non-integrable distributions 
and $\sect{\lft(\bigoplus^{n-2k-1}Z\rgt)\setminus\tilde{S}}$ 
corresponds to the set of almost maximally non-integrable distributions, 
this implies Theorem~\ref{thm:key}. 
\end{proof}

\section{Proof of Theorems~\ref{thma} and~\ref{thmb}  and further observations}
\label{sec:proof}
  Following Theorem~\ref{thm:key}, we prove Theorems~\ref{thma} and~\ref{thmb}
in Subsection~\ref{sec:proofs}. 
 Further, we discuss some miscellaneous things 
on the case when the tangent distributions are orientable. 
 Corollary~\ref{corA} is proved in Subsection~\ref{sec:ori}. 

\subsection{Proof of theorems}\label{sec:proofs}
  First, we show Theorem~\ref{thma}. 
 Since a genuine maximally non-integrable distribution of derived length one 
of odd rank 
is an almost maximally non-integrable distribution of derived length one, 
the necessity of Theorem~\ref{thma} is clear. 
 Then we show the sufficiency of the theorem. 
 Let $V$ be a manifold of dimension~$n$, 
and $\mathcal{D}\subset TV$ a distribution of rank~$r=2k+1$. 
 Suppose that it is almost maximally non-integrable of derived length one. 
 In other words, there is an $(n-2k-1)$-tuple of $2$-forms 
$\{\omega_1,\omega_2,\dots,\omega_{n-2k-1}\}$ for which 
$(\omega_1)^k,\dots,(\omega_{n-2k-1})^k$ are pointwise linearly independent 
on $\mathcal{D}$. 
 Then, from Theorem~\ref{thm:key}, 
on the manifold $V$ there exists an almost maximally non-integrable distribution
$\mathcal{E}=\{\alpha_1=0,\alpha_2=0,\dots,\alpha_{n-2k-1}=0\}\subset TV$ 
of rank~$2k+1$ of derived length one 
with respect to $(n-2k-1)$-tuple 
$\{d\alpha_1,\ d\alpha_2,\dots,d\alpha_{n-2k-1}\}$ of $2$-forms. 
 This implies that $\mathcal{E}\subset TV$ 
is a genuine maximally non-integrable distribution of derived length one on $V$.
 This conclude the proof of Theorem~\ref{thma}. 

  Then, we show Theorem~\ref{thmb}. 
 We apply the one-parametric \textit{h}-principle in Theorem~\ref{thm:key}. 
 Let $\mathcal{D}_0,\ \mathcal{D}_1\subset TV$ be maximally non-integrable 
distributions of derived length one of odd rank on a manifold $V$. 
 Suppose that they are homotopic 
through almost maximally non-integrable distributions of derived length one. 
 In other words, there is a path $(\mathcal{D}_t,\{\omega_i^t\})$ 
of almost maximally non-integrable distributions of derived length one 
between them. 
 Then, from Theorem~\ref{thm:key}, the path $(\mathcal{D}_t,\{\omega_i^t\})$ 
can be deformed to a path $(\tilde{\mathcal{D}}_t,\{d\alpha_i^t\})$ 
of genuine maximally non-integrable distributions of derived length one 
keeping both $\mathcal{D}_0=\tilde{\D}_0$, $\mathcal{D}_1=\tilde{\D}_1$. 
 This implies that $\mathcal{D}_0$ and $\mathcal{D}_1$ are isotopic. 
 This conclude the proof of Theorem~\ref{thmb}. \qed

\subsection{Orientable cases}\label{sec:ori}
 In this subsection, we only deal with orientable tangent distributions. 
 In some cases, the topological condition on manifolds 
in the existence theorem, Theorem~\ref{thma}, can be described simpler. 
 First, we introduce more general description. 
 Then we obtain Corollary~\ref{corA} as a special case. 

  For orientable distributions, 
fixing dimension of manifolds and rank of tangent distributions, 
we discuss the existence of maximally non-integrable distributions 
of derived length one. 
 Recall that, for such a distribution of rank~$2k+1$, 
the dimension~$n$ of manifolds should be $2k+2\le n\le 2(2k+1)=4k+2$ 
(see Section~\ref{sec:distr}). 
 When corank of distribution is large, 
we obtain the following proposition as a corollary of Theorem~\ref{thma}. 
 In other words, we obtain the condition on manifolds 
to have a maximally non-integrable orientable distribution 
of type~$(2k+1,4k+1)$ or~$(2k+1,4k+2)$. 
%
%
\begin{prop}\label{prop:ori}
  Let $V$ be a manifold possibly closed. 
 $V$ admits a maximally non-integrable orientable distribution 
of type~$(2k+1,4k+1)$ or~$(2k+1,4k+2)$ 
if and only if 
there exists a trivial subbundle $\mathcal{E}\subset TV$ of rank~$2k+1$. 
\end{prop}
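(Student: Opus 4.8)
The plan is to take the candidate trivial subbundle to be $\mathcal{D}$ itself and to convert maximal non-integrability into a frame (or a near-frame) of the bundle $\bigwedge^{2k}\mathcal{D}^\ast$. The main algebraic tool is the canonical isomorphism $\bigwedge^{2k}\mathcal{D}^\ast\cong\mathcal{D}\otimes\det\mathcal{D}^\ast$ for the rank-$(2k+1)$ bundle $\mathcal{D}$, realized fibrewise by the interior product $v\otimes\mu\mapsto\iota_v\mu$. Throughout I would use that orientability of $\mathcal{D}$ means exactly that $\det\mathcal{D}$ (equivalently $\det\mathcal{D}^\ast$) is a trivial line bundle. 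Recall that in both types the corank is $n-2k-1\in\{2k,\,2k+1\}$, while $\operatorname{rank}\bigwedge^{2k}\mathcal{D}^\ast=\binom{2k+1}{2k}=2k+1$, so for type $(2k+1,4k+2)$ the number of defining forms equals this rank and for type $(2k+1,4k+1)$ it is one short.

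For the implication from a trivial subbundle to the existence of the structure, I would start from a global frame $e_1,\dots,e_{2k+1}$ of $\mathcal{E}$, extend the dual coframe $e^1,\dots,e^{2k+1}$ to $1$-forms on $V$ (say via a splitting $TV=\mathcal{E}\oplus\mathcal{E}^\perp$), and set $\omega_j:=e^{a_1}\wedge e^{b_1}+\dots+e^{a_k}\wedge e^{b_k}$, where $\{a_1,b_1,\dots,a_k,b_k\}$ pairs the $2k$ indices different from $j$. A direct computation gives $(\omega_j)^k|_{\mathcal{E}}=\pm\,k!\,e^1\wedge\dots\widehat{e^j}\dots\wedge e^{2k+1}$. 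Taking $j=1,\dots,n-2k-1$ (all $2k+1$ of them for type $(2k+1,4k+2)$, and $2k$ of them for type $(2k+1,4k+1)$) yields pointwise linearly independent $2k$-forms on $\mathcal{E}$, so $(\mathcal{E},\{\omega_j\})$ is almost maximally non-integrable, and $\mathcal{E}$ is orientable since it is trivial. Theorem~\ref{thma} then produces a genuine maximally non-integrable distribution; since Theorem~\ref{thm:key} deforms $\mathcal{E}$ only through rank-$(2k+1)$ plane fields, the resulting distribution is homotopic to $\mathcal{E}$ and hence orientable.

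For the converse, a genuine maximally non-integrable orientable $\mathcal{D}$ of the given type is in particular almost maximally non-integrable, so there are $n-2k-1$ two-forms $\omega_i=d\alpha_i$ for which $(\omega_1)^k|_{\mathcal{D}},\dots,(\omega_{n-2k-1})^k|_{\mathcal{D}}$ are pointwise linearly independent sections of $\bigwedge^{2k}\mathcal{D}^\ast\cong\mathcal{D}\otimes\det\mathcal{D}^\ast$. For type $(2k+1,4k+2)$ these are $2k+1=\operatorname{rank}$ sections, hence a global frame, so $\mathcal{D}\otimes\det\mathcal{D}^\ast$ is trivial; tensoring by the trivial line bundle $\det\mathcal{D}$ (orientability) gives $\mathcal{D}$ trivial, and I take $\mathcal{E}=\mathcal{D}$. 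For type $(2k+1,4k+1)$ I obtain only $2k$ independent sections, which split off a trivial rank-$2k$ subbundle $T$ with line-bundle quotient $L$, so $\mathcal{D}\otimes\det\mathcal{D}^\ast\cong T\oplus L$ and $L\cong\det(\mathcal{D}\otimes\det\mathcal{D}^\ast)=(\det\mathcal{D})^{-2k}$ by the identity $\det(E\otimes\ell)=\det E\otimes\ell^{\operatorname{rank}E}$; orientability makes this trivial, whence $\mathcal{D}\otimes\det\mathcal{D}^\ast$ and then $\mathcal{D}$ are trivial.

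The step I expect to be the crux is the type $(2k+1,4k+1)$ case of the converse: there one section short of a frame remains, and triviality of $\mathcal{D}$ is not immediate but is forced by orientability through the determinant computation $\det(\mathcal{D}\otimes\det\mathcal{D}^\ast)=(\det\mathcal{D})^{-2k}$. A second point to handle carefully is that Theorem~\ref{thma} a priori only asserts existence of \emph{some} genuine structure; I must confirm that the underlying plane field stays in the homotopy class of $\mathcal{E}$, so that orientability is preserved, which follows because the convex-integration deformation of Theorem~\ref{thm:key} moves the distribution only within rank-$(2k+1)$ subbundles of $TV$.
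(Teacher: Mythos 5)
Your proposal is correct and follows essentially the same route as the paper: for sufficiency, the identical construction of the $2$-forms $\omega_j$ from a coframe of $\mathcal{E}$ with $(\omega_j)^k=\pm k!\,e^1\wedge\dots\wedge\widehat{e^j}\wedge\dots\wedge e^{2k+1}$ followed by Theorem~\ref{thma}, and for necessity the same identification of $\bigwedge^{2k}\mathcal{D}^\ast$ with $\mathcal{D}$ (the paper fixes a trivialization $\gamma$ of $\bigwedge^{2k+1}\mathcal{D}^\ast$ and contracts, where you keep the twist by $\det\mathcal{D}^\ast$ explicit and untwist via orientability), with orientability supplying the missing $(2k+1)$-st section in the type $(2k+1,4k+1)$ case. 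Your explicit verification that the genuine structure produced by Theorem~\ref{thma} stays in the homotopy class of $\mathcal{E}$ and hence remains orientable is a point the paper leaves implicit, but it does not change the approach.
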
 

\begin{proof}
  First, we discuss the sufficiency. 
 Suppose that there exists a trivial subbundle $\mathcal{E}\subset TV$ of 
rank~$2k+1$ 
on a manifold $V$ of dimension~$n=4k+1$ or~$4k+2$. 
 Then there exists a coframe $X^\ast_1,\dots,X^\ast_{2k+1}$ 
of $\mathcal{E}^\ast\subset T^\ast V$. 
 Setting for each $i=1,2,\dots,2k+1$, 
\begin{equation*}
  \omega_i:=
  X^\ast_{p_1}\wedge X^\ast_{p_2}+\dots+X^\ast_{p_{2k-1}}\wedge X^\ast_{p_{2k}}, 
  \qquad \{p_1,\dots,p_{2k}\}=\{1,\dots,\hat{i},\dots,2k+1\}, 
\end{equation*}
we have $(2k+1)$-tuple $\{\omega_1,\dots,\omega_{2k+1}\}$ of $2$-forms. 
 This tuple of $2$-forms satisfies 
that $(\omega_1)^k,\dots,(\omega_{2k+1})^k$ are linearly independent $2k$-forms 
on the distribution $\mathcal{E}$ of rank~$2k+1$ since
\begin{equation*}
  (\omega_i)^k=\pm(k!) X^\ast_1\wedge\dots\wedge\widehat{X^\ast_i}\wedge\dots
  \wedge X^\ast_{2k+1},\quad i=1,2,\dots,2k+1. 
\end{equation*}
 When the dimension of $V$ is $n=4k+1$ (resp.\ $n=4k+2$), 
the corank of $\mathcal{E}\subset TV$ is $2k$ (resp.\ $2k+1$). 
 Then $\mathcal{E}$ with $\{\omega_1,\dots,\omega_{2k}\}$ 
(resp.\ $\{\omega_1,\dots,\omega_{2k+1}\}$) 
is almost maximally non-integrable of derived length one. 
 Then, from Theorem~\ref{thma}, the manifold $V$ admits 
a maximally non-integrable distribution $\mathcal{D}\subset TV$ 
of derived length one of rank~$2k+1$. 
 In other words, $\mathcal{D}\subset TV$ 
is a maximally non-integrable distribution 
of type~$(2k+1,4k+1)$ or~$(2k+1,4k+2)$. 

  Next, we discuss the necessity. 
 Suppose that a manifold $V$ admits a maximally non-integrable 
orientable distribution of type~$(2k+1,4k+1)$ or $(2k+1,4k+2)$. 
 From Theorem~\ref{thma}, we may suppose 
that there exists an almost maximally non-integrable orientable distribution 
$\mathcal{D}\subset TV$ of derived length one of rank~$2k+1$. 
 When the dimension of the manifold $V$ is $n=4k+1$ (resp.\ $n=4k+2$), 
the codimension of $\mathcal{D}\subset TV$ is $2k$ (resp.\ $2k+1$). 
 Then there exists a $(2k)$-tuple of $2$-forms $\{\omega_1,\dots,\omega_{2k}\}$ 
(resp.\ $(2k+1)$-tuple of $2$-forms $\{\omega_1,\dots,\omega_{2k+1}\}$) 
for the almost maximal non-integrability (see Section~\ref{sec:distr}). 
 This implies that $\{(\omega_1)^k,\dots,(\omega_{2k})^k\}$ is a $2k$-tuple 
(resp.\ $\{(\omega_1)^k,\dots,(\omega_{2k+1})^k\}$ is a $(2k+1)$-tuple) 
of linearly independent $(2k)$-forms 
on the distribution $\mathcal{D}$ of rank~$2k+1$. 

  Now we observe the tangent distribution $\mathcal{D}\subset TV$. 
 Since $\mathcal{D}$ is orientable, $\bigwedge^{2k+1}\mathcal{D}$ is trivial. 
 Then $\bigwedge^{2k+1}\mathcal{D}^\ast$ is also trivial. 
 Taking a non-zero section, or $(2k+1)$-form, 
$\gamma$ of $\bigwedge^{2k+1}\mathcal{D}^\ast$, 
we have an equivalence $\mathcal{D}\cong\bigwedge^{2k}\mathcal{D}^\ast$ 
by the contraction $X\mapsto \iota_X \gamma$. 
 Then $\bigwedge^{2k}\mathcal{D}^\ast$ is also orientable. 

  When the dimension of $V$ is $n=4k+1$, 
we have linearly independent $2k$ non-zero sections 
$(\omega_1)^k,\dots,(\omega_{2k})^k$ 
of the vector bundle $\bigwedge^{2k}\mathcal{D}^\ast\cong \mathcal{D}$ 
of rank~$2k+1$. 
 From the orientation of $\mathcal{D}\cong\bigwedge^{2k}\mathcal{D}^\ast$, 
we have another independent non-zero section. 
 Then, $\mathcal{D}$ is trivial. 
 When the dimension of $V$ is $n=4k+2$, we already have 
linearly independent $2k+1$ non-zero sections 
$(\omega_1)^k,\dots,(\omega_{2k+1})^k$ 
of the vector bundle $\bigwedge^{2k}\mathcal{D}^\ast\cong \mathcal{D}$ 
of rank~$2k+1$. 
 Then $\mathcal{D}$ is trivial. 

  This concludes the proof of Proposition~\ref{prop:ori}. 
\end{proof}

  From Proposition~\ref{prop:ori}, in the case when $k=1$ and the dimension 
of the manifold $V$ is $n=4k+1=5$, 
we obtain Corollary~\ref{corA} as follows. 
\begin{proof}[Proof of Corollary~\ref{corA}]
  For $k=1$, 
we obtain from Proposition~\ref{prop:ori} 
that a manifold $V$ of dimension~$5$ 
admits a maximally non-integrable orientable distribution of type~$(3,5)$ 
if and only if 
there exists a trivial subbundle $\mathcal{E}\subset TV$ of rank~$3$. 
 As we mentioned in Section~\ref{sec:distr}, 
a tangent distribution of type~$(3,5)$ is automatically 
maximally non-integrable. 
 Thus we have proved Corollary~\ref{corA}. 
\end{proof}

%
%
%

\bigskip 

\begin{flushleft}
Department of Mathematics, \\
Hokkaido University, \\
Sapporo, 060--0810, Japan. \\
\medskip
e-mail: j-adachi@math.sci.hokudai.ac.jp
\end{flushleft}
\end{document}